\numberwithin{equation}{section}
\numberwithin{figure}{section}
\newcommand{\beq}{\begin{equation}}
	\newcommand{\eeq}{\end{equation}}
\newtheorem{theorem}{Theorem}[section]
\newtheorem{lemma}{Lemma}[section]
\newtheorem{proposition}{Proposition}[section]
\newtheorem{remark}{Remark}[section]
\newtheorem*{lemma*}{Lemma}
\begin{document}
	
	\title[Gross-Pitaevskii equation in the energy-critical case]{Ground state of the Gross-Pitaevskii equation with a harmonic potential in the energy-critical case}
	
	\author{Dmitry E. Pelinovsky}
	\address[D.E. Pelinovsky]{Department of Mathematics and Statistics, McMaster University,
		Hamilton, Ontario, Canada, L8S 4K1}
	\email{dmpeli@math.mcmaster.ca}
	
	\author{Szymon Sobieszek}
	\address[S. Sobieszek]{Department of Mathematics and Statistics, McMaster University,	Hamilton, Ontario, Canada, L8S 4K1}
	\email{sobieszs@mcmaster.ca}

	\keywords{Gross--Pitaevskii equation, ground state, energy-critical case, shooting method}
	
	\begin{abstract}
		Ground state of the energy-critical Gross–Pitaevskii equation with a harmonic potential can be constructed variationally. It exists in a finite interval of the eigenvalue parameter. The supremum norm of the ground state vanishes at one end of this interval and diverges to  infinity at the other end. We explore the shooting method in the limit of large norm to prove that the ground state is pointwise close to the Aubin--Talenti solution of the energy-critical wave equation in near field and to the confluent hypergeometric function in far field. The shooting method gives the precise dependence of the eigenvalue parameter versus the supremum norm.	
	\end{abstract}
	
	\date{\today}
	\maketitle
	
% Asymptotic Analysis, DCDS, JMAA, Nonlinear Analysis
	
	\section{Introduction}
	
	We consider the stationary Gross--Pitaevskii equation with a harmonic potential,
	\begin{equation}
		\label{GP}
		(-\Delta + |x|^2) u - |u|^{2p} u = \lambda u, 
	\end{equation}
	where $x \in \mathbb{R}^d$, $\lambda \in \mathbb{R}$, and $u \in \mathbb{R}$. Existence of its ground state (a positive and radially decreasing solution) has been addressed by using variational methods in the energy subcritical \cite{Fuk,KW} and critical \cite{Selem2011,SK2012} cases, where the critical exponent is $p = \frac{2}{d-2}$ if $d \geq 3$. Uniqueness of the ground state 
	was proven in the energy subcritical \cite{HO1,HO2} and critical 
	\cite{SW} cases. 
	
	Variational methods are not applicable in the energy supercritical case $p(d-2) > 2$ if $d \geq 3$ for which a more efficient shooting method was developed in 
	our previous work \cite{BIZON2021112358,Pel-Sob} (see also \cite{Ficek} for study of the Schr\"{o}dinger--Newton--Hooke model). The shooting method is based on the reformulation of the existence problem after the Emden--Fowler transformation \cite{F,JL} and construction of two analytic families 
	of solutions, one family gives a bounded solution near $r = 0$ with parameter 
	$b := u(0) \equiv \| u \|_{L^{\infty}}$ and the other family
	gives a decaying solution as $r \to \infty$ with parameter 
\begin{equation}
\label{c-asymptotics}
	c := \lim_{r \to \infty} u(r) e^{\frac{1}{2} r^2} r^{\frac{d-\lambda}{2}}.
\end{equation}
The shooting method gives robust results in the large-norm limit as $b \to \infty$. In an asymptotic region, where both solution families coexist, the matching condition gives a condition on $(c,\lambda)$ as a function of $b$ 
which determines the solution curve $\lambda = \lambda(b)$. The $c$-family of solutions is defined in a local neighborhood of the limiting singular solution constructed in \cite{Selem2013} for $\lambda = \lambda_{\infty} \in (0,d)$ such that $\lambda(b) \to \lambda_{\infty}$ as $b \to \infty$ along the solution curve. As follows from the shooting method  \cite{BIZON2021112358} under some non-degeneracy assumptions, the convergence is oscillatory for $2 + \frac{2}{p} < d < d_*(p)$ and monotone for $d > d_*(p)$, where 
$$
d_*(p) := 6 + \frac{2}{p} + 2 \sqrt{4 + \frac{2}{p}}.
$$
The same shooting method was extended in \cite{Pel-Sob} to compute the Morse index of the ground state in the monotone case from the Morse index of the limiting singular solution. It was recently proven in \cite{PWW} by using comparison with the stationary Schr\"{o}dinger equation solvable in terms of  the confluent hypergeometric functions (see \cite{AS1972} for review) that the Morse index of the limiting singular solution is infinite 
	in the oscillatory case and is equal to one in the monotone case 
	for large values of $d > d_*(p)$. 
	
	Properties of the energy-supercritical Gross--Pitaevskii equation with a harmonic potential are very similar to those for the energy-supercritical nonlinear Schr\"{o}dinger equation in a ball. See \cite{Budd1989,Budd_Norbury1987,DF} for the developments in the shooting method, \cite{MP} for convergence to the limiting singular solution, 
	and \cite{GuoWei,KikuchiWei} for computation of the Morse index.
	
	The purpose of this work is to extend the shooting method to the energy-critical case and to obtain the asymptotic representation of $\lambda(b)$ as $b \to \infty$. As far as we are aware, the shooting method has not been previously developed in the context of the energy-critical case, for which the variational approximations are more common.
	
	For the shooting method in the energy-critical case with $d = 2 + \frac{2}{p}$, we introduce the same two analytic families of solutions, 
	the $b$-family is defined by parameter $b := u(0)$ and the $c$-family is defined by parameter $c$ in the asymptotic behavior (\ref{c-asymptotics}). Contrary to the energy-supercritical case, the $c$-family exists in a local neighborhood of a spatially decaying solution to the stationary Schr\"{o}dinger equation
	\begin{equation*}
		V''(r) + \frac{d-1}{r} V'(r) - r^2 V(r) + \lambda V(r) = 0, 
	\end{equation*}
	which is satisfied by $V(r) = c e^{-\frac{1}{2} r^2} \mathfrak{U}(r^2;\alpha,\beta)$, where $c \in \mathbb{R}$ is arbitrary and $\mathfrak{U}(z;\alpha,\beta)$ is the Tricomi function (see \cite{AS1972}) with 
	\begin{equation}
		\label{alpha-beta}
		\alpha := \frac{p+1}{2p}-\frac{\lambda}{4}, \qquad 
		\beta := 1+\frac{1}{p}. 
	\end{equation}
Furthermore, contrary to the energy-supercritical case, the $b$-family exists in a local neighborhood of the algebraic soliton 
	\begin{eqnarray}
		\label{alg-soliton}
		U_b(r) = \frac{b}{(1 + \alpha_p b^{2p} r^2)^{\frac{1}{p}}},  \qquad \alpha_p := \frac{p^2}{4(1+p)},
	\end{eqnarray}
	where the parameter $b$ has been introduced from the condition $b = U_b(0)$.
	The algebraic soliton (also called the Aubin-Talenti solution \cite{Aubin,Talenti}) satisfies the nonlinear wave equation $-\Delta U_b = U_b^{2p+1}$ for every $b > 0$. It has been used in many studies of the energy-critical wave equations in bounded domains as in the pioneering work \cite{BN83} and in follow-up works \cite{FKK20,Gustaf,H91,HV01,R90}.  
	In the context of the stationary Gross--Pitaevskii equation (\ref{GP}), it was used in \cite{Selem2011} in order to obtain the lower bound 
	on the dependence $\lambda(b)$ from a variational method. Recently in \cite{PWW}, the variational methods and the elliptic estimates were extended in order to get the upper bound on the dependence $\lambda(b)$. We will use the shooting method to justify the relevance of the algebraic soliton (\ref{alg-soliton}) for the asymptotic behavior of $\lambda(b)$ as $b \to \infty$.
	
			\begin{figure}[hbt!]
		\centering  
		\includegraphics[width=7cm,height=6cm]{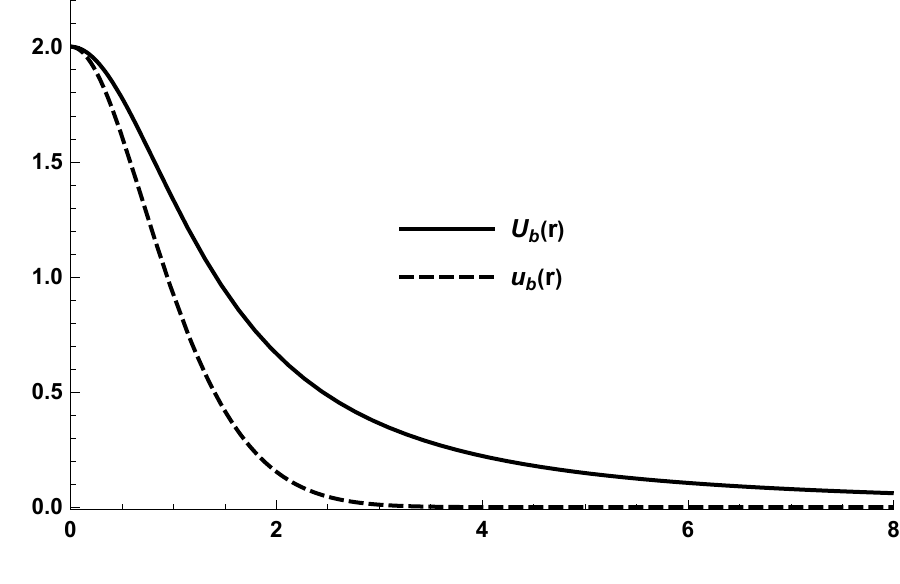}   
		\includegraphics[width=7cm,height=6cm]{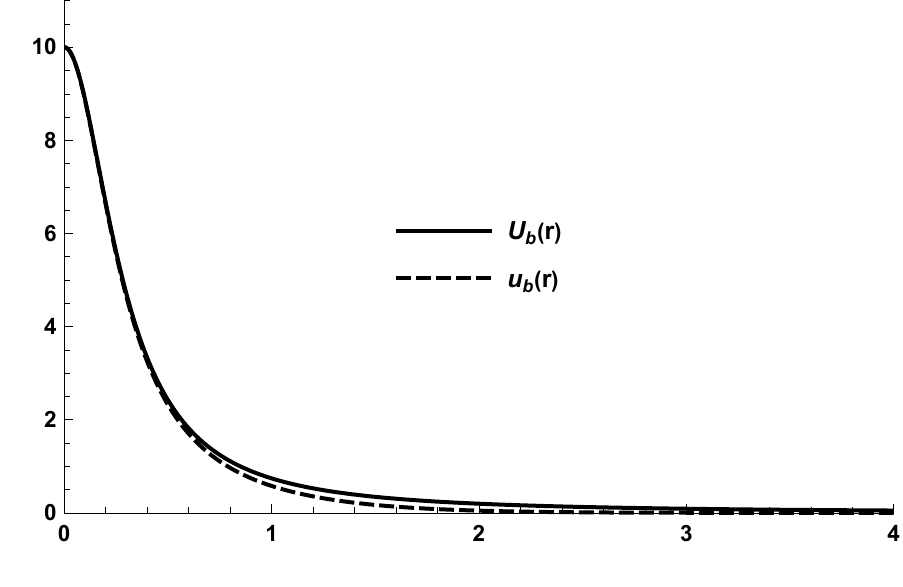}  
		\caption{Ground states of the stationary equation \eqref{GP} with $p=1$ and $d=4$ compared with the algebraic soliton \eqref{alg-soliton} for $b = 2$ (left) and $b = 10$ (right).} 
		\label{fig-1}
	\end{figure}
	
	\begin{figure}[hbt!]
		\centering  
		\includegraphics[width=10cm,height=7cm]{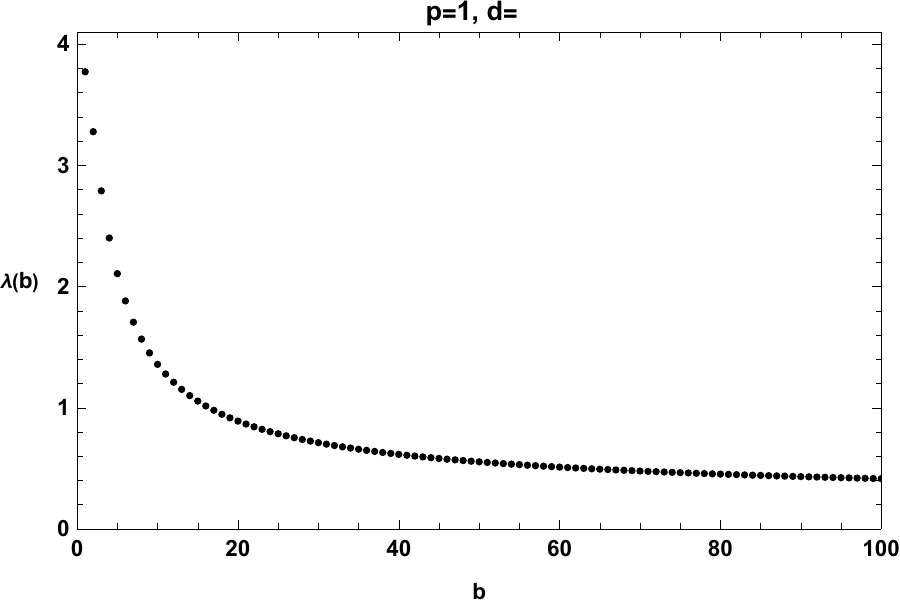}   
		\caption{The dependence $\lambda = \lambda(b)$ for $p=1$ and $d=4$.} 
		\label{fig-11}
	\end{figure}
	
	Figure \ref{fig-1} shows the numerically obtained profile $u_b$ versus $r$ 
	in comparison with the profile $U_b$ for $b = 2$ (left) and $b = 10$ (right). Visualization is given for $p = 1$ (that corresponds to $d = 4$). Results for other values of $p \in (0,1)$ are similar. The two profiles are different for $b = 2$ but the discrepancy gets smaller 
	for $b = 10$ and becomes invisible for larger values of $b$. The values of $\lambda$ are uniquely defined in terms of $b$ along the curve $\lambda = \lambda(b)$ which is shown in Figure \ref{fig-11} for $p = 1$.

The following theorem presents outcomes of the shooting method, which is the main result of this work. We use the following notations: 
\begin{itemize}
	\item $\lambda(b) \sim \lambda_0(b)$ denotes the asymptotic equivalence
	in the sense $\lim\limits_{b \to \infty} \lambda_0(b)^{-1} \lambda(b) = 1$,
	\item $\lambda(b) = \mathcal{O}(b^q)$ denotes the order of magnitude in the sense that 
	$|\lambda(b)| \leq C b^q$ for some $C > 0$ and all sufficiently large $b$.
\end{itemize}
	
	\begin{theorem}
		\label{theorem-main}
Fix $p=\frac{2}{d-2} \in (0,1)$ for $d > 4$ and let $\lambda = \lambda(b)$ be the solution curve for the ground state $u = u_b$ of the stationary Gross--Pitaevskii equation \eqref{GP} satisfying $u_b(0) = b$, $u_b'(r) < 0$ for $r > 0$, and $u_b(r) \to 0$ as $r \to \infty$. Then,
		\begin{equation}
			\label{lambda-b}
			\lambda(b) \sim
			C_p \left\{ \begin{array}{ll} \displaystyle 
				b^{-2(1-p)}, & \quad \frac{1}{2} < p < 1, \\ \displaystyle 
				b^{-1} \log b, & \quad p = \frac{1}{2}, \\ \displaystyle 
				b^{-2p}, & \quad 0 < p < \frac{1}{2}, \\
			\end{array} \right. \quad \mbox{\rm as} \quad b \to \infty,
		\end{equation}
with 
$$
C_p = \left\{ \begin{array}{ll}
\displaystyle 
-\frac{\Gamma\left(\frac{p+1}{2p}\right)\Gamma\left(-\frac{1}{p}\right)\Gamma\left(\frac{2}{p}\right)}{(1+p) \Gamma\left(\frac{p-1}{2p}\right)\Gamma\left(\frac{1}{p}\right)\Gamma\left(\frac{1}{p}-1\right) \Gamma\left(\frac{1}{p}+1\right)} 
\left[\frac{4 (1+p)}{p^2}\right]^{\frac{1}{p}}, & \quad \frac{1}{2} < p < 1, \\ \displaystyle 
144, & \quad p = \frac{1}{2}, \\
\displaystyle 
\frac{8(1+p)^2}{p^2 (1-2p)}, & \quad 0 < p < \frac{1}{2}. \\
\end{array} \right.
$$
Moreover, for every $a \in (0,\frac{p}{1+p})$, there exist $B_a,C_a > 0$
		such that for every $b \geq B_a$, we have 
		\begin{equation}
			\label{asymptotics-u-b}
			\sup_{r \in [0,b^{-p(1-a)}]} b^{-1} \left| u_b(r) -  U_b(r)\right| \leq C_a b^{-2p(1-a)}, 
		\end{equation}
		\begin{equation}
			\label{asymptotics-u-c}
			\sup_{r \in [b^{-p(1-a)},1]} r^{\frac{2}{p}} \left| u_b(r) - c(b) e^{-\frac{1}{2} r^2} \mathfrak{U}(r^2;\alpha,\beta) \right| \leq C_a |c(b)|^{2p+1} b^{2p(1-a)}
		\end{equation}
		and 
\begin{equation}
		\label{asymptotics-u-d}
		\sup_{r \in [1,\infty)} \left| e^{\frac{1}{2} r^2} r^{\frac{d-\lambda(b)}{2}} u_b(r) - c(b) r^{\frac{d-\lambda(b)}{2}} \mathfrak{U}(r^2;\alpha,\beta) \right| \leq C_a |c(b)|^{2p+1},
		\end{equation}
		where $c = c(b) \sim A_p b^{-1}$ as $b \to \infty$ for some $A_p > 0$.
	\end{theorem}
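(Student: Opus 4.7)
My plan is to follow the shooting-method strategy outlined in the introduction, implementing it rigorously in the energy-critical case. I will construct two analytic families of radial solutions to \eqref{GP}: a $b$-family of solutions bounded at the origin, modeled on the Aubin--Talenti profile $U_b$, and a $c$-family of solutions decaying at infinity, modeled on the Tricomi solution $V_0(r) := e^{-r^2/2}\mathfrak U(r^2;\alpha,\beta)$. The two families exist on overlapping intervals of $r$, and agreement on a common window around $r \asymp b^{-p(1-a)}$ will simultaneously fix $c = c(b)$, the eigenvalue curve $\lambda = \lambda(b)$, and the pointwise bounds \eqref{asymptotics-u-b}--\eqref{asymptotics-u-d}.

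For the $b$-family I would rescale $\rho := b^p r$, $u(r) = b\,\tilde u(\rho)$, turning \eqref{GP} into a small perturbation of the critical wave equation $-\Delta_\rho U_1 = U_1^{2p+1}$ solved by $U_1(\rho) = (1+\alpha_p \rho^2)^{-1/p}$, with perturbation given by the harmonic-potential term $b^{-4p}\rho^2 \tilde u$ and the eigenvalue term $b^{-2p}\lambda\,\tilde u$. Writing $\tilde u = U_1 + V$, the linearized operator $L_0 := -\Delta_\rho - (2p+1)U_1^{2p}$ has an explicit fundamental system built from the scaling mode $V_1(\rho) = (1-\alpha_p\rho^2)(1+\alpha_p\rho^2)^{-1-1/p}$ (bounded at $0$, decaying like $\rho^{-2/p}$ at infinity) and its Wronskian partner $V_2$ obtained by Abel's formula (singular like $\rho^{-2/p}$ at $0$, tending to a finite constant as $\rho\to\infty$). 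Variation of parameters turns $L_0 V = F$ into an integral equation on $\rho \in [0,b^{pa}]$ to which I apply the contraction mapping principle in a weighted $L^\infty$ ball; this produces \eqref{asymptotics-u-b}.

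For the $c$-family the anchor $V_0$ solves $-\Delta V_0 + r^2 V_0 = \lambda V_0$, with linearly independent growing partner $W_0 \sim e^{r^2/2} r^{\lambda-d}$ at infinity. Writing $u = c V_0 + w$, \eqref{GP} becomes $(-\Delta + r^2 - \lambda)w = (cV_0 + w)^{2p+1}$. Using the small-$z$ expansion $\mathfrak U(z;\alpha,\beta) = \Gamma(1/p)\Gamma(\alpha)^{-1} z^{-1/p} + \Gamma(-1/p)\Gamma(\alpha-\beta+1)^{-1} + O(z^{1-1/p})$ together with the Green function of $-\Delta + r^2 - \lambda$ constructed from $V_0$ and $W_0$, the nonlinear source is controlled in a weighted space, and a second contraction argument on $r \in [b^{-p(1-a)},\infty)$ yields \eqref{asymptotics-u-c}--\eqref{asymptotics-u-d}.

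The crux is the matching in the overlap. Equating the leading $r^{-2/p}$ coefficients of $U_b$ and $cV_0$ fixes $c(b) \sim A_p b^{-1}$ with $A_p = \alpha_p^{-1/p}\Gamma((p+1)/(2p))/\Gamma(1/p)$. The constant mode $c\,\Gamma(-1/p)\Gamma(\alpha-\beta+1)^{-1}$ of $cV_0$ at small $r$ must then be balanced against the constant-in-$r$ tail of the $b$-family in the overlap, which is generated by two corrections in the rescaled equation: (i) the eigenvalue correction $V_\lambda = b^{-2p}\lambda\,L_0^{-1}U_1$, whose large-$\rho$ limit is a nonzero constant $\gamma_\lambda$ (readable off the Wronskian of $V_1,V_2$) and contributes $b^{1-2p}\lambda\,\gamma_\lambda$, and (ii) the harmonic-potential correction $V_{r^2} = -b^{-4p} L_0^{-1}(\rho^2 U_1)$, whose particular solution behaves like $\rho^{4-2/p}$ and contributes a term of size $b^{1-4p}$ at the matching radius. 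For $\tfrac12 < p < 1$ the $V_\lambda$-term dominates and matching to $c\cdot O(1) \sim b^{-1}$ forces $\lambda \sim C_p b^{-2(1-p)}$; for $0 < p < \tfrac12$ the $V_{r^2}$-term has larger size $b^{1-4p} \gg b^{-1}$ and the eigenvalue correction must cancel it, forcing $\lambda \sim C_p b^{-2p}$; at $p = \tfrac12$ the exponent $4 - 2/p$ collides with the constant mode of $V_2$, so $L_0^{-1}(\rho^2 U_1)$ picks up a resonant $\log\rho$ which evaluates to $\log b$ at $\rho = b^{p(1-a)}$ and delivers $\lambda \sim 144\, b^{-1}\log b$. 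The main obstacle, and where most of the technical work will go, is the sharp asymptotic bookkeeping in this matching: computing the constants $\gamma_\lambda$ and $\gamma_{r^2}$ explicitly in closed form from the Green function of $L_0$ paired with the Tricomi-function expansion, handling the $p = \tfrac12$ resonance so that the coefficient $144$ comes out, and verifying that the error terms produced by each of the two contraction arguments are strictly smaller than the identified matched contributions, so that the computed leading behavior is genuinely the true asymptotic of $\lambda(b)$.
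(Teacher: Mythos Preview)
Your plan is correct and follows the same shooting-and-matching strategy as the paper: a $b$-family anchored on the Aubin--Talenti profile, a $c$-family anchored on the Tricomi solution, and a matching in the overlap window $r\asymp b^{-p(1-a)}$ that determines $c(b)$ from the $r^{-2/p}$ mode and $\lambda(b)$ from the constant mode, with the three regimes $p\gtrless\tfrac12$ arising exactly as you describe. The only real difference is the choice of coordinates: the paper applies the Emden--Fowler change $r=e^t$, $\Psi(t)=r^{1/p}f(r)$, so that the $b$-family becomes a perturbation of an explicit homoclinic orbit $\Theta_h$ of the autonomous equation $\Theta''-p^{-2}\Theta+\Theta^{2p+1}=0$, with linearized kernel $\{\Theta_h',\Sigma\}$; your $V_1,V_2$ are precisely these two functions read back through the transformation, and your integrals $\gamma_\lambda,\gamma_{r^2}$ coincide with the paper's $\int e^{2t}\Theta_h^2\,dt$ and $\int e^{4t}\Theta_h^2\,dt$. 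The Emden--Fowler frame makes the contraction estimates slightly cleaner (the kernel $K(t,t')$ is bounded rather than algebraically weighted) and makes the $p=\tfrac12$ resonance appear as a linearly divergent integral rather than a $\log\rho$ in a Green function, but otherwise the two computations are line-by-line equivalent. One point to watch in your bookkeeping: for $\tfrac12<p<1$ the source $\rho^2 U_1$ is not integrable against $V_1\,\rho^{d-1}$ at infinity, so $L_0^{-1}(\rho^2 U_1)$ has no finite constant tail and instead grows like $\rho^{4-2/p}$; you must check (as the paper does) that evaluated at the matching radius this growing piece is still smaller than the Tricomi constant $c\cdot O(1)\sim b^{-1}$, which holds precisely for $a<1$.
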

	
	\begin{remark}
		The asymptotic result (\ref{lambda-b}) coincides with Theorem 1.1 in \cite{PWW} obtained by the variational theory and elliptic estimates. It follows from Remark 1.2 in \cite{PWW} that 
		there exists $C_d$ such that 
		\begin{equation}
			\label{lambda-b-PWW}
			\lambda(\varepsilon) \sim C_d
			\left\{ \begin{array}{ll}
				\varepsilon & \quad d = 5, \\
				\varepsilon^2 |\log \varepsilon| & \quad d = 6, \\
				\varepsilon^2 & \quad d \geq 7, \\
			\end{array} \right.
		\end{equation}
		where $\varepsilon = b^{-p}$ is defined from the algebraic soliton (\ref{alg-soliton}). The case  $d \geq 7$ corresponds to 
		$0 < p < \frac{1}{2}$ as in (\ref{lambda-b}). For $d = 6$, we have $p = \frac{1}{2}$ so that $\varepsilon^2 |\log \varepsilon| \sim b^{-1} \log b$ as in (\ref{lambda-b}). For $d = 5$, we have $p = \frac{2}{3}$ so that $\varepsilon \sim b^{-2/3} = b^{-2(1-p)}$ as in (\ref{lambda-b}).
	\end{remark}

\begin{remark}
	\label{remark-fail}
It also follows from Remark 1.2 in \cite{PWW} that $\lambda(\varepsilon) \sim 
C |\log \varepsilon|^{-1}$ for $d = 4$ and $\lambda(\varepsilon) - 1\sim C \varepsilon$ for $d = 3$. In our notations with $\varepsilon = b^{-p}$, this would correspond to $\lambda(b) \sim C (\log b)^{-1}$ for $p = 1$ 
and $\lambda(b) - 1 \sim C b^{-2}$ for $p = 2$. However, we have found that the shooting method based on the $b$-family and the $c$-family can be applied for $p \in (0,1)$ but needs some further modifications for $p \geq 1$.
\end{remark}

\begin{remark}
	Since $\mathfrak{U}(r^2;\alpha,\beta) = \mathcal{O}(r^{-\frac{2}{p}})$ as $r \to b^{-p(1-a)}$, bound (\ref{asymptotics-u-c}) shows that $u_b(r) = \mathcal{O}(b^{1-2a})$ as $r \to b^{-p(1-a)}$. This is smaller than $u_b(r) = \mathcal{O}(b)$ as $r \to 0$ in the bound (\ref{asymptotics-u-b}). Since $\mathfrak{U}(r^2;\alpha,\beta) = \mathcal{O}(r^{-\frac{d-\lambda(b)}{p}})$ as $r \to \infty$, bound (\ref{asymptotics-u-d}) shows that $u_b(r)$ satisfies the asymptotic behavior (\ref{c-asymptotics}) with $c = c(b) = \mathcal{O}(b^{-1})$ as $b \to \infty$. 
\end{remark}
	
	Figure \ref{fig-2} illustrates relevance of the asymptotic result (\ref{lambda-b}) for the solution curve  $\lambda=\lambda(b)$. For a given dimension $d$ and the critical exponent $p=\frac{2}{d-2}$, we numerically find $\lambda(b)$ and plot it versus $b$ in comparison with the asymptotic dependence (\ref{lambda-b}). The left and right panels show the plots for $d = 7$ when $p = \frac{2}{5}$ and $\lambda(b) \sim C_p b^{-4/5}$ and for $d = 5$ when $p = \frac{2}{3}$ and $\lambda(b) \sim C_p b^{-2/3}$, where $C_p$ is obtained from the best least square fit. The proximity between the numerical and analytical curves becomes obvious in the log-log plot for larger values of $b$. 
	
	\begin{figure}[hbt!]
		\centering
		\includegraphics[width=7cm,height=6cm]{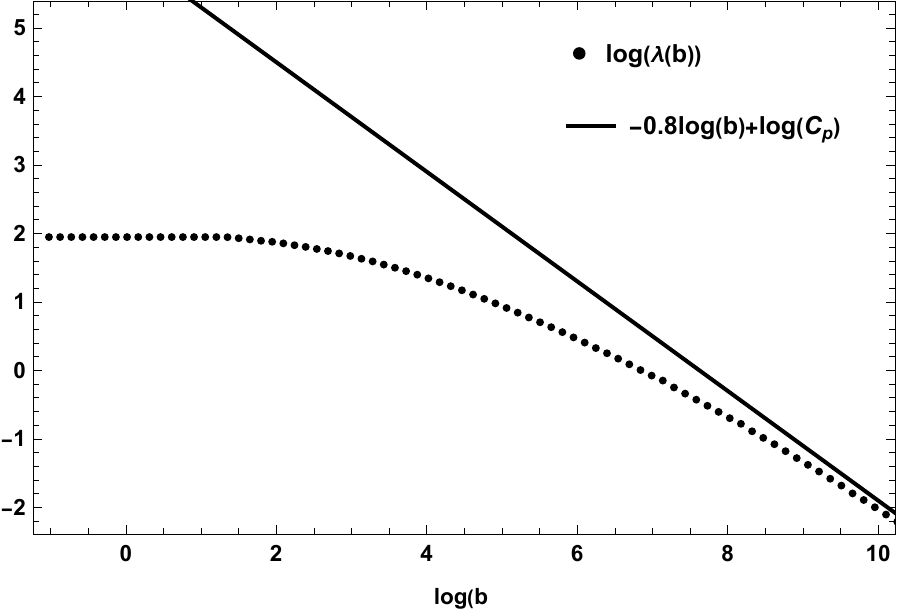}\qquad
		\includegraphics[width=7cm,height=6cm]{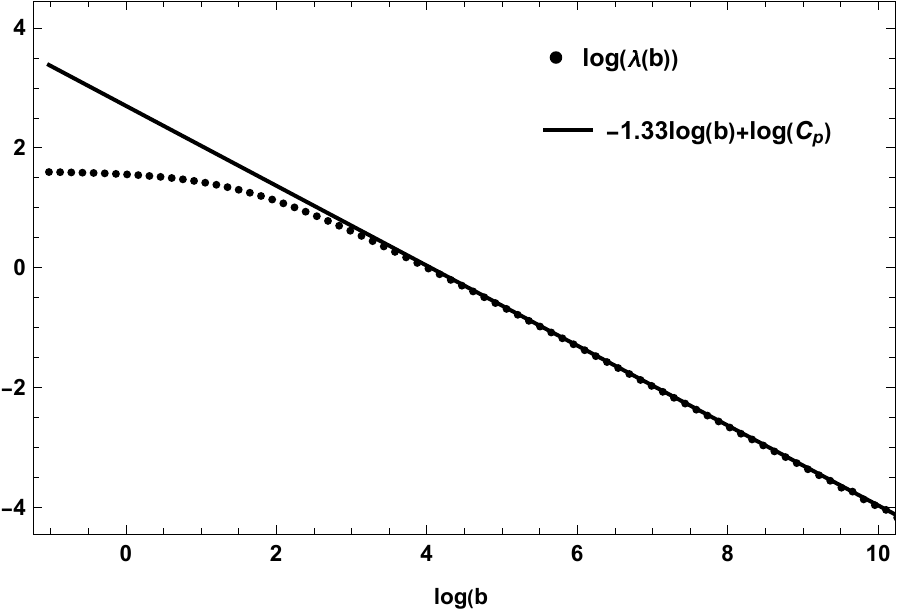}
		\caption{Log-log graphs of $\lambda(b)$ versus $b$ for the ground state of the stationary equation \eqref{GP} for $d=7$ (left) and $d=5$ (right) compared with the analytical dependence given in \eqref{lambda-b}.}
		\label{fig-2}
	\end{figure}

	Our strategy to prove Theorem \ref{theorem-main} is as follows. 
	Section \ref{sec-2} contains preliminary results where 
	the existence problem is reformulated after the Emden--Fowler transformation 
	and the two solution families and their truncated limits are clearly identified. 
	Section \ref{sec-3} gives analysis of the $b$-family in a local 
	neighborhood of the algebraic soliton (\ref{alg-soliton}) which becomes 
	the exponentially decaying soliton after the Emden--Fowler transformation. 
	Section \ref{sec-4} describes analysis of the $c$-family 
	in a local neighborhood of the confluent hypergeometric functions. 
	Theorem \ref{theorem-main} is proven in Section \ref{sec-5} where the two families are considered in the common asymptotic region with parameters $c$ and $\lambda$ obtained uniquely in the asymptotic limit $b \to \infty$. 
	Besides the asymptotic dependence (\ref{lambda-b}) which recovers independently the result (\ref{lambda-b-PWW}) obtained in \cite{PWW} with different methods, the main outcome of this work is the precise asymptotic construction 
	of the ground state with pointwise estimates (\ref{asymptotics-u-b}), (\ref{asymptotics-u-c}), and (\ref{asymptotics-u-d})  near the Aubin-Talenti solution and the confluent hypergeometric function.
	
	\section{Preliminary results}
	\label{sec-2}
	
	As in our previous works \cite{BIZON2021112358,Pel-Sob}, we reformulate the existence problem for the ground state of the stationary Gross--Pitaevskii equation (\ref{GP}) as the following initial-value problem:
	\begin{equation}
		\label{eq:f_b_ivp}
		\left\{ \begin{array}{ll}
			f''(r) + \frac{d-1}{r} f'(r) - r^2 f(r) + \lambda f(r) + |f(r)|^{2p} f(r) =0, & \quad r > 0,  \\
			f(0)=b, \quad f'(0)=0, & \quad 
		\end{array} \right.
	\end{equation}
	where $b > 0$ is the free parameter and $d = 2+\frac{2}{p}$ is defined in terms of $p > 0$ in the energy-critical case. We say that the solution 
	of the initial-value problem (\ref{eq:f_b_ivp}) is a ground state 
	if $f'(r) < 0$ for $r > 0$ and $f(r) \to 0$ as $r \to \infty$. 
	Similarly to Lemmas 3.2 and 3.4 in \cite{BIZON2021112358} obtained 
	in the particular case $p = 1$, the existence of a unique classical 
	solution to the initial-value problem \eqref{eq:f_b_ivp} can be concluded 
	by using the integral equation formulation and the Lyapunov function method. 
	We skip the proof since it is standard and state that for every $p > 0$, $\lambda \in \mathbb{R}$, and $b>0$, there exists a unique classical solution $f \in C^2(0,\infty)$ to the initial-value problem \eqref{eq:f_b_ivp} satisfying the asymptotic behavior:
	\begin{equation}
		f(r) = b - \frac{p (\lambda+ b^{2p})}{4(p+1)} b r^2 + \mathcal{O}(r^4), \quad \text{as } \;\; r \to 0.
		\label{eq:f_b_asympt}
	\end{equation} 
	
	The singularity of the stationary equation \eqref{eq:f_b_ivp} at $r=0$ is unfolded by introducing the Emden-Fowler transformation:
	\begin{equation}
		r = e^{t}, \quad \Psi(t) = e^{\frac{t}{p}}f(e^{t}).
		\label{eq:EF_transf}
	\end{equation}
	After the transformation of variables, $\Psi$ satisfies the second-order nonautonomous equation
	\begin{equation}
		\Psi''(t) -\frac{1}{p^2}\Psi(t) + |\Psi(t)|^{2p} \Psi(t) = -\lambda e^{2t}\Psi(t) + e^{4t}\Psi(t).
		\label{eq:Psi_eq}
	\end{equation}
	
	We say that the $b$-family of solutions to Eq. (\ref{eq:Psi_eq}) is defined 
	by applying the transformation (\ref{eq:EF_transf}) to the unique solution of the initial-value problem (\ref{eq:f_b_ivp}). The corresponding $b$-solution, denoted as $\Psi_b(t)$, satisfies the asymptotic behaviour 
	\begin{equation}
		\Psi_b(t) = be^{\frac{t}{p}} \left[ 1 -   \frac{p (\lambda+ b^{2p})}{4(p+1)} e^{2 t} + \mathcal{O}(e^{4 t}) \right], \quad \text{ as } \;\; t\to-\infty,
		\label{eq:Psi_b_asympt}
	\end{equation}
	which follows from \eqref{eq:f_b_asympt}. Thus, the $b$-family of solutions decays to zero as $t \to -\infty$. We will show in Section \ref{sec-3} that the $b$-family stays close to the positive homoclinic orbit of the truncated version of  Eq. \eqref{eq:Psi_eq} given by the second-order autonomous equation
	\begin{equation}
		\Theta''(t)-\frac{1}{p^2}\Theta(t)+|\Theta(t)|^{2p} \Theta(t)=0.
		\label{eq:theta_nonlinear}
	\end{equation}
	The second-order equation (\ref{eq:theta_nonlinear}) is integrable with the first-order invariant
	\begin{equation}
		\label{eq:theta-first}
		\frac{1}{2} (\Theta')^2 - \frac{1}{2p^2} \Theta^2 + \frac{1}{2(p+1)} \Theta^{2(p+1)} = E,
	\end{equation}
	where $E$ is constant along the classical solutions of Eq. (\ref{eq:theta_nonlinear}). The origin in the $(\Theta,\Theta')$-plane is a saddle point. The unique (up to translation) positive homoclinic orbit exists at the energy level $E = 0$ for every $p > 0$. The homoclinic orbit can be found explicitly in the form
	\begin{equation}
		\Theta_h(t+t_0) = \frac{e^{\frac{t+t_0}{p}}}{(1+\alpha_p e^{2(t+t_0)})^{\frac{1}{p}}}, \qquad \alpha_p:=\frac{p^2}{4\left(1+p\right)},
		\label{eq:Theta_alg_sol}
	\end{equation}
	where $t_0\in\mathbb{R}$ is an arbitrary parameter of translation. Since
	\begin{equation}
 \Theta_h(t) =	\left\{ \begin{array}{ll} e^{\frac{t}{p}} [1+\mathcal{O}(e^{2t})] \quad & \text{ as } \;\;t\to-\infty, \\ 
\alpha_p^{-\frac{1}{p}} e^{-\frac{t}{p}} [1+\mathcal{O}(e^{-2t})] \quad & \text{ as } \;\; t\to+\infty, \end{array} \right.
		\label{eq:Theta_asympt}
	\end{equation}
it follows by comparison with (\ref{eq:Psi_b_asympt}) that 
	$\Psi_b(t) \sim b \Theta_h(t)$ as $t \to -\infty$ for which 
	the translation parameter $t_0$ in (\ref{eq:Theta_alg_sol}) is uniquely selected 
	as $t_0 = p \log b$. With this choice for $t_0$, 
	we observe that $\Theta_h(t+p \log b)$ after transformation (\ref{eq:EF_transf}) coincides with the algebraic soliton $U_b(r)$ given by (\ref{alg-soliton}).
	
	Next, we introduce another analytical family of solutions to Eq. \eqref{eq:Psi_eq} that decay to zero as $t\to +\infty$, which we call 
	the $c$-family and denote as $\Psi_c(t)$. We show in Section \ref{sec-4} 
	that the $c$-family stays close to the decaying solutions 
	of the linearized version of Eq. (\ref{eq:Psi_eq}) given by the linear 
	second-order nonautonomous equation 
	\begin{equation}
		\Upsilon''(t) -\frac{1}{p^2}\Upsilon(t) + \lambda e^{2t} \Upsilon(t) - e^{4t}\Upsilon(t) = 0.
		\label{eq:Upsilon_eq}
	\end{equation}
	By using the change of variables
	\begin{equation}
		\label{change-variables}
		z = e^{2t}, \quad \Upsilon(t) = z^{\frac{1}{2p}} e^{-\frac{1}{2} z} u(z),
	\end{equation}
	the second-order equation (\ref{eq:Upsilon_eq}) becomes the confluent hypergeometric equation (also known as the Kummer equation):
	\begin{equation}
		zu''(z) + \left(\beta-z\right)u'(z) - \alpha u(z) = 0,
		\label{eq:kummer}
	\end{equation}
	with parameters $\alpha$ and $\beta$ given by (\ref{alpha-beta}).
	Two special solutions of the Kummer equation \eqref{eq:kummer} 
	are given by the Kummer function $\mathfrak{M}(z;\alpha,\beta)$ and the Tricomi function 
	$\mathfrak{U}(z;\alpha,\beta)$, which are defined as follows \cite{AS1972}. The Kummer function is defined by the power series 
	\begin{align}
		\label{Kummer}
		\mathfrak{M}(z;\alpha,\beta) &= 
		\sum_{k=0}^{\infty} \frac{(\alpha)_k}{(\beta)_k} \; \frac{z^k}{k!} \\
		&=
		1 + \frac{\alpha}{\beta}\frac{z}{1!} + \frac{\alpha(\alpha+1)}{\beta(\beta+1)}\frac{z^2}{2!} + \frac{\alpha(\alpha+1)(\alpha+2)}{\beta(\beta+1)(\beta+2)}\frac{z^3}{3!} + \ldots, \notag
	\end{align}
	hence it is bounded as $z \to 0$. The Tricomi function satisfies 
	the asymptotic behavior 
	\begin{equation}
		\label{Tricomi}
		\mathfrak{U}(z;\alpha,\beta) \sim z^{-\alpha} \left[ 1 + \mathcal{O}(z^{-1}) \right] \quad \mbox{\rm as} \;\; z \to +\infty,
	\end{equation}
	hence it is decaying as $z \to +\infty$ if $\alpha > 0$. In fact, for every $\lambda \in (-\infty,d)$, we have
	\begin{equation}
	\label{eq:alph}
	\alpha = \frac{p+1}{2p} - \frac{\lambda}{4} > \frac{p+1}{2p} -\frac{d}{4} = \frac{p+1}{2p}-\frac{1}{2}-\frac{1}{2p}=0,
	\end{equation}
	so that $\alpha > 0$ is satisfied in the energy-critical case.
	By \cite[13.1.3]{AS1972}, the Tricomi function can be represented in the superposition form
	\begin{equation}
		\label{scatering-1}
		\mathfrak{U}(z;\alpha,\beta) = \frac{\pi}{\sin \pi \beta}\left[\frac{\mathfrak{M}(z;\alpha,\beta)}{\Gamma(1+\alpha-\beta)\Gamma(\beta)} -z^{1-\beta}\frac{\mathfrak{M}(z;1+\alpha-\beta,2-\beta)}{\Gamma(\alpha)\Gamma(2-\beta)} \right],
	\end{equation}
which is true for $\beta \notin \mathbb{Z}$ but can also be used in the limit 
$\beta \to \mathbb{Z}$. By using the identity 
\begin{equation}
\label{Gamma-function}
\frac{\pi}{\sin\pi z}=\Gamma(1-z)\Gamma(z), \quad z \notin \mathbb{Z}.
\end{equation}
we can rewrite \eqref{scatering-1} for $\beta \notin \mathbb{N}$ as
	\begin{equation}
		\label{scatering-1_simple}
		\mathfrak{U}(z;\alpha,\beta) = \frac{\Gamma(1-\beta)}{\Gamma(1+\alpha-\beta)}\mathfrak{M}(z;\alpha,\beta) +z^{1-\beta}\frac{\Gamma(\beta-1)}{\Gamma(\alpha)}\mathfrak{M}(z;1+\alpha-\beta,2-\beta).
	\end{equation}
	By \cite[13.1.6]{AS1972}, if $\beta = n + 1 \in \mathbb{N}$, then 
	\begin{align}
		\mathfrak{U}(z;\alpha,n+1) &= \frac{(-1)^{n+1}}{n! \Gamma(\alpha-n)} ( \mathfrak{M}(z;\alpha,n+1) \log z  \nonumber \\
		& \qquad + \sum_{k=0}^{\infty} \frac{(\alpha)_k}{(n+1)_k} \; \frac{z^k}{k!} [\psi(\alpha + k) - \psi(1+k) - \psi(1+n+k)] ) \nonumber \\
			& + \frac{1}{\Gamma(\alpha)}  
			\sum_{k=1}^n \frac{(k-1)! (1-\alpha+k)_{n-k}}{(n-k)!} z^{-k}, 
		\label{scatering-2}
	\end{align}
	where $\psi(z) = \Gamma'(z)/\Gamma(z)$.
	
	By means of the transformation (\ref{change-variables}), Tricomi function 
	determines a suitable solution of the linear equation (\ref{eq:Upsilon_eq}):
	\begin{equation}
		\label{eq:Ups_h}
		\Upsilon_h(t) = e^{\frac{t}{p}} e^{-\frac{1}{2} e^{2t}} \mathfrak{U}(e^{2t};\alpha,\beta).
	\end{equation}
	This solution is considered to be the leading-order approximation of the $c$-family such that 
	$\Psi_c(t) \sim c \Upsilon_h(t)$ as $t \to +\infty$ satisfies the asymptotic behavior 
	\begin{equation}
		\label{far-field}
		\Psi_c(t) \sim c e^{-\frac{(2-\lambda) t}{2}} e^{-\frac{1}{2} e^{2t}} \quad \mbox{\rm as} \;\; t \to +\infty.
	\end{equation}
	The ground state of Theorem \ref{theorem-main}
	is the connection of the unique solution of the initial-value problem (\ref{eq:f_b_ivp}) satisfying (\ref{eq:f_b_asympt}) with the unique 
	solution satisfying the decay behavior
	\begin{equation}
		\label{far-field-r}
		f(r) \sim c r^{-(1 + \frac{1}{p} - \frac{\lambda}{2})} e^{-\frac{1}{2} r^2} \quad \mbox{\rm as} \;\; r \to \infty,
	\end{equation}
	which follows from (\ref{eq:EF_transf}) and (\ref{far-field}).
	The connection between (\ref{eq:f_b_asympt}) and (\ref{far-field-r}) only exists for some specific values of $c = c(b)$ and $\lambda = \lambda(b)$. Thus, the main question is to find and to justify the analytical expressions 
	for $c(b)$ and $\lambda(b)$ in the asymptotic limit $b \to \infty$.

	\section{Persistence of the $b$-family of solutions}
	\label{sec-3}
	
	The $b$-family of solutions $\Psi_b$ of the differential equation (\ref{eq:Psi_eq}) satisfying (\ref{eq:Psi_b_asympt}) is considered in 
	a neighborhood of the homoclinic orbit $\Theta_h$ of the differential 
	equation (\ref{eq:theta_nonlinear}) satisfying (\ref{eq:Theta_asympt}). 
	Since the comparison gives $\Psi_b(t) \sim b \Theta_h(t) \sim \Theta_h(t + p \log b)$ as $t \to -\infty$, we translate $\Psi_b(t)$ by $-p\log b$ and  introduce the perturbation term
	$$
	\gamma(t):=\Psi_b(t-p\log b) - \Theta_h(t),
	$$  
	which satisfies
	\begin{equation}
		\mathcal{L}\gamma = f_b(\Theta_h+\gamma) - N(\Theta_h,\gamma),
		\label{eq:L_gam}
	\end{equation}
	where $f_b(t) := -\lambda b^{-2p} e^{2t} + b^{-4p}e^{4t}$, 
	\begin{align*}
		(\mathcal{L}\gamma)(t) := \gamma''(t) -\frac{1}{p^2}\gamma(t) + (2p+1) |\Theta_h(t)|^{2p}\gamma(t), 
	\end{align*}
	and 
	\begin{align*}
		N(\Theta_h,\gamma) := |\Theta_h+\gamma|^{2p} (\Theta_h + \gamma) - |\Theta_h|^{2p} \Theta_h - (2p+1) |\Theta_h|^{2p}\gamma.
	\end{align*}
	
	\begin{remark}
		Since $\Theta_h(t)$ is positive for all $t \in \mathbb{R}$ 
		and $\Theta_h(t) + \gamma(t)$ is shown to be positive in the region of $t$ where 
		we analyze persistence of the $b$-family of solutions, we can neglect 
		writing modulus signs in $\mathcal{L} \gamma$ and $N(\Theta_h,\gamma)$. 
	\end{remark}

	The nonlinear term $N(\Theta_h,\gamma)$ is superlinear in $\gamma$ if $p \in (0,\frac{1}{2})$ and quadratic if $p \geq \frac{1}{2}$, according to the following proposition. 
	
	\begin{proposition}
		\label{lemma:nonlin}
		Fix $p > 0$ and $a>0$. If $F:[-a,a]\rightarrow\mathbb{R}$ is defined as 
		$$
		F(x) := (a+x)^{2p+1}-a^{2p+1}-(2p+1)a^{2p}x, 
		$$
		then there exists a positive constant $C>0$, such that for all $x\in[-a,a]$
		\begin{equation}
			|F(x)| \leq \bigg\{\begin{array}{ll}
				C|x|^{2p+1}, & \text{if } p\in(0,\frac{1}{2}),\\
				C|x|^2, & \text{if } p\in [\frac{1}{2},\infty).
			\end{array}
			\label{estimate-on-f}
		\end{equation} 
	\end{proposition}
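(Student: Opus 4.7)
My plan is to exploit the second-order vanishing of $F$ at the origin. Direct differentiation gives $F(0) = 0$ and $F'(0) = (2p+1) a^{2p} - (2p+1) a^{2p} = 0$, so the problem reduces to controlling the behavior of the second derivative $F''(x) = (2p+1)(2p)(a+x)^{2p-1}$ on $[-a,a]$. The argument then splits into two cases depending on whether $F''$ is bounded on the interval.

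For $p \geq \tfrac{1}{2}$ the exponent $2p-1 \geq 0$, so $F''$ is continuous on $[-a,a]$ (constant equal to $2$ if $p=\tfrac{1}{2}$, vanishing at $x=-a$ if $p > \tfrac{1}{2}$) and bounded by $(2p+1)(2p)(2a)^{2p-1}$. Taylor's theorem with the Lagrange remainder at base point $0$ then immediately yields $|F(x)| \leq \tfrac{1}{2}\|F''\|_\infty x^2$, which is the claimed quadratic bound.

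For $p \in (0,\tfrac{1}{2})$ the second derivative blows up as $x \to -a^+$ since $2p-1 < 0$, so the pointwise $Cx^2$ bound is unavailable. Instead I would rescale by $y := x/a \in [-1,1]$, giving
\[
F(x) = a^{2p+1}\, g(y), \qquad g(y) := (1+y)^{2p+1} - 1 - (2p+1)y,
\]
and reducing everything to a universal bound $|g(y)| \leq C|y|^{2p+1}$ on $[-1,1]$. The ratio $h(y) := g(y)/|y|^{2p+1}$ is continuous on $[-1,1] \setminus \{0\}$ with finite endpoint values $h(-1) = 2p$ and $h(1) = 2^{2p+1} - 2p - 2$, so the only potentially singular point is $y = 0$. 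But the smooth Taylor expansion of $(1+y)^{2p+1}$ at $y=0$ gives $g(y) = \tfrac{1}{2}(2p+1)(2p)\, y^2 + o(y^2)$, hence
\[
h(y) \sim \tfrac{1}{2}(2p+1)(2p)\,|y|^{1-2p} \longrightarrow 0 \quad \text{as } y \to 0,
\]
using the key inequality $1 - 2p > 0$. Therefore $h$ extends continuously to $[-1,1]$ with $h(0) = 0$, hence is bounded, which gives $|g(y)| \leq C|y|^{2p+1}$ on $[-1,1]$; unrolling the scaling yields $|F(x)| \leq C|x|^{2p+1}$ with a constant depending on $p$ and $a$.

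The main obstacle is precisely this sub-quadratic regime: the $C^2$ Taylor argument that trivializes $p \geq \tfrac{1}{2}$ breaks down at $x = -a$, and one must replace the pointwise derivative estimate with the rescaling-and-continuous-extension argument above. The crucial structural fact making it work is that $g$ vanishes to order $2$ at the origin while one only needs decay of order $2p+1 < 2$ on the right-hand side, and this gap $2 - (2p+1) = 1-2p > 0$ is exactly what forces $h(y) \to 0$ as $y \to 0$.
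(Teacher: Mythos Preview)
Your proof is correct and follows essentially the same strategy as the paper: reduce to $a=1$ by scaling, handle $p\ge\tfrac12$ by bounding $F''$ and invoking Taylor's theorem, and for $p<\tfrac12$ combine the Taylor behavior near $0$ with boundedness away from $0$. The only cosmetic difference is that the paper splits $[-1,1]$ into $[-\tfrac12,\tfrac12]$ and its complement, while you phrase the same dichotomy as a continuous-extension argument for the ratio $g(y)/|y|^{2p+1}$.
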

	
	\begin{proof}
		Without the loss of generality, we assume that $a=1$ 
		due to the scaling transformation:
		$$
		F(x)=a^{2p+1} \left[ \left(1+\frac{x}{a}\right)^{2p+1} -1 -\left(2p+1\right)\frac{x}{a} \right].
		$$
		Note that $F''(x)=2p(2p+1)(1+x)^{2p-1}$, so that if $2p -1 \geq 0$, then $F''$ is bounded on $[-1,1]$, and the second line of (\ref{estimate-on-f}) follows from Taylor's theorem. If $2p - 1 < 0$, then $F''$ is bounded on $[-\frac{1}{2},\frac{1}{2}]$ so  that
		\begin{equation*}
			|F(x)|\leq C|x|^2 \leq C|x|^{2p+1}, \quad x \in \left[-\frac{1}{2},\frac{1}{2} \right],
		\end{equation*}
		for some positive constant $C$. For $|x| \in [\frac{1}{2},1]$, we have $\left(\frac{1}{2}\right)^{2p+1}\leq |x|^{2p+1}$ so that 
		\begin{equation*}
			|F(x)| = \left(\frac{1}{2}\right)^{2p+1} 2^{2p+1} |F(x)| 
			\leq C|x|^{2p+1}, 
			\quad |x| \in \left[\frac{1}{2},1\right], 
		\end{equation*}
		for another positive constant $C$. The two estimates above give the second line of (\ref{estimate-on-f}).
	\end{proof}
	
	The homogeneous equation $\mathcal{L}\gamma=0$ admits two linearly independent solutions. The first one is given by  $\Theta_h'(t)$ due to 
	the translation symmetry of the autonomous equation (\ref{eq:theta_nonlinear}). The other solution denoted by $\Sigma(t)$ can be obtained from the Wronskian relation
	\begin{equation}
		\Theta_h'(t)\Sigma'(t)-\Theta_h''(t)\Sigma(t)=\Sigma_0, \qquad t \in \mathbb{R},
		\label{eq:Wronsk_Theta_Sigma}
	\end{equation}
	where $\Sigma_0\neq 0$ is constant. We take $\Sigma_0=1$ for normalizing  $\Sigma(t)$. Using 
	(\ref{eq:Theta_asympt}) in \eqref{eq:Wronsk_Theta_Sigma}, we obtain that
	\begin{equation}
		\Sigma(t) = -\frac{p^2}{2} \left\{ \begin{array}{ll} 
e^{-\frac{t}{p}} [1+\mathcal{O}(e^{2t})] \quad & \text{ as } \;\;  t\to-\infty, \\
\alpha_p^{\frac{1}{p}}e^{\frac{t}{p}} [1+\mathcal{O}(e^{-2t})] \quad & \text{ as } \;\; t\to+\infty. \end{array} \right.
		\label{eq:Sigma_asympt}
	\end{equation}
	Using the two linearly independent solutions $\Theta_h'$ and $\Sigma$ of the  homogeneous equation $\mathcal{L}\gamma=0$, we rewrite \eqref{eq:L_gam} as an integral equation for $\gamma$:
	\begin{equation}
		\gamma(t) = \int_{-\infty}^t \left(\Theta_h'(t')\Sigma(t)-\Theta_h'(t)\Sigma(t')\right)\left[f_b(t')\left(\Theta_h(t')+\gamma(t')\right)-N\left(\Theta_h(t'),\gamma(t')\right)\right]dt',
		\label{eq:gamm_int_eq}
	\end{equation}
	where the free solution $c_1\Theta_h'(t)+c_2\Sigma(t)$ has been set to zero from the requirement that $\gamma(t)$ decays to zero as $t\to-\infty$ faster  than $\Theta_h'(t)$.
	
	The perturbation term $\gamma$ can be estimated to be small in the $L^{\infty}$ norm on the semi-infinite interval $(-\infty, T+ap \log b]$ with fixed $T > 0$ and $a > 0$, where the right end point diverges asymptotically to $+\infty$ as $b\to\infty$. The following lemma gives the persistence result 
	for the solution $\Psi_b(t-p\log b)$ to stay close to the leading-order 
	term $\Theta_h(t)$ for $t \in (-\infty,T+ap\log b]$.
	
	\begin{lemma}\label{lemma:gamm_est}
		Fix $p \in (0,1]$ and $\lambda\in\mathbb{R}$. For any fixed $T>0$ and $a\in(0,\frac{p}{1+p})$ there exist $b_{T,a}>0$ and $C_{T,a}>0$ such that the unique solution $\Psi_b(t)$ to the second-order equation \eqref{eq:Psi_eq} with asymptotic behaviour \eqref{eq:Psi_b_asympt} satisfies for $t\in(-\infty,T+ap\log b]$ and all $b\geq b_{T,a}$:
		\begin{equation}
|\Psi_b(t-p\log b)-\Theta_h(t)| \leq C_{T,a}b^{-2p(1-a)} e^{\frac{t}{p}},
			\label{eq:gamma_bound}
		\end{equation}
		where the bound can be differentiated in $t$.
	\end{lemma}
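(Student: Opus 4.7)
The plan is to recast \eqref{eq:gamm_int_eq} as a fixed point problem in a weighted Banach space and apply the contraction mapping theorem. Working on the interval $I_b := (-\infty, T + ap\log b]$, I would use the norm
$$\|\gamma\|_{X_b} := \sup_{t \in I_b} e^{-t/p}|\gamma(t)|,$$
whose weight $e^{-t/p}$ is dictated by the near-$-\infty$ asymptotics $\Theta_h(t) \sim e^{t/p}$ from \eqref{eq:Theta_asympt} and by the shape of the target bound \eqref{eq:gamma_bound}. The desired estimate then reduces to $\|\gamma\|_{X_b} \leq C_{T,a}\, b^{-2p(1-a)}$, and the differentiability of the bound in $t$ follows by differentiating the integral representation and repeating the argument in an analogous norm for $\gamma'$.

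The kernel $K(t,t') := \Theta_h'(t')\Sigma(t) - \Theta_h'(t)\Sigma(t')$ admits the uniform bound $|K(t,t')| \lesssim e^{(t-t')/p}$ for $t' \leq t$, obtained from $|\Theta_h'(s)| \lesssim e^{-|s|/p}$ and $|\Sigma(s)| \lesssim e^{|s|/p}$ via \eqref{eq:Theta_asympt} and \eqref{eq:Sigma_asympt}. Plugging this into the linear source term of \eqref{eq:gamm_int_eq} (setting $\gamma = 0$), together with $f_b(t') = -\lambda b^{-2p}e^{2t'} + b^{-4p}e^{4t'}$ and $\Theta_h(t') \lesssim e^{-|t'|/p}$, a direct calculation gives
$$e^{-t/p}\left|\int_{-\infty}^t K(t,t') f_b(t')\Theta_h(t')\, dt'\right| \lesssim b^{-2p}e^{2t} + b^{-4p}e^{4t},$$
which is $\mathcal{O}(b^{-2p(1-a)})$ uniformly on $I_b$. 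This is the source of smallness.

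For the nonlinear term I would invoke Proposition \ref{lemma:nonlin} applied with $a = \Theta_h(t')$ and $x = \gamma(t')$: after restoring the scaling absorbed in the proof of that proposition, it yields $|N(\Theta_h,\gamma)| \leq C|\gamma|^{2p+1}$ for $p \in (0,\tfrac12)$ and $|N(\Theta_h,\gamma)| \leq C \Theta_h^{2p-1}|\gamma|^2$ for $p \in [\tfrac12,1]$, valid as long as $|\gamma| \leq \Theta_h$. Inserting these into \eqref{eq:gamm_int_eq} and working in the $X_b$-norm produces a Lipschitz estimate $\|\mathcal{T}\gamma_1 - \mathcal{T}\gamma_2\|_{X_b} \leq \varepsilon(b)\|\gamma_1 - \gamma_2\|_{X_b}$ on the ball $B_R := \{\gamma : \|\gamma\|_{X_b} \leq R\, b^{-2p(1-a)}\}$, with $\varepsilon(b) \to 0$ as $b \to \infty$. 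The restriction $a < \tfrac{p}{1+p}$ enters precisely here: at the right endpoint $t = T + ap\log b$, one has $\Theta_h(t) \sim b^{-a}$ while a function in $B_R$ satisfies $|\gamma(t)| \lesssim b^{a-2p(1-a)}$, so the ratio $|\gamma|/\Theta_h$ scales like $b^{2a-2p(1-a)}$ and is small exactly when $a(1+p) < p$. This simultaneously validates Proposition \ref{lemma:nonlin} on all of $I_b$ and forces $\varepsilon(b) = o(1)$. A standard contraction argument then produces the unique fixed point, which gives \eqref{eq:gamma_bound}.

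The main obstacle I expect is the two-case structure of the nonlinear estimate and the need to track the powers of $b$ carefully so that the Lipschitz constant is genuinely $o(1)$ rather than merely $O(1)$. For $p \in [\tfrac12,1]$ the presence of the factor $\Theta_h^{2p-1}$ in the nonlinear bound suggests splitting $I_b$ into a left region where $\Theta_h$ is small (and this factor helps) and a central region where $\Theta_h$ is of order unity (handled by brute force on a set of bounded size). Once the weighted estimates are organised this way, the remaining steps—closing the ball $B_R$ under $\mathcal{T}$ and verifying the contraction property for $b \geq b_{T,a}$—are routine, and the derivative bound follows from the same scheme applied to $\gamma'$.
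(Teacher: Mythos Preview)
Your proposal is correct and follows essentially the same route as the paper: the paper introduces $\tilde{\gamma}(t)=e^{-t/p}\gamma(t)$ (equivalent to your weighted norm $\|\cdot\|_{X_b}$), obtains the analogous kernel bound, applies Proposition~\ref{lemma:nonlin} in the two regimes $p\in(0,\tfrac12)$ and $p\in[\tfrac12,1]$, and closes a contraction in $L^\infty(I_b)$ with the same smallness condition $a<\tfrac{p}{1+p}$. The only superfluous element in your plan is the proposed splitting of $I_b$ for $p\in[\tfrac12,1]$: once you pass to the tilde variables, $\tilde{\Theta}_h=e^{-t/p}\Theta_h$ is uniformly bounded on all of $I_b$, so the factor $\Theta_h^{2p-1}$ becomes $e^{(2p-1)t/p}\tilde{\Theta}_h^{2p-1}$ and is absorbed directly into the $e^{2t'}$ weight without any regional decomposition.
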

	
	\begin{remark}
		The remainder term in the bound (\ref{eq:gamma_bound}) is small 
		on $[ap\log b,T + ap\log b]$ for every $a \in (0,\frac{2p}{1+2p})$ for which $b^{-2p+a(2p+1)}\to 0$ as $b\to\infty$. However, $\Theta_h(t) = \mathcal{O}(b^{-a})$ on the same interval so that the remainder term $\gamma(t)$ is smaller than the leading-order term $\Theta_h(t)$ for $t\in [ap\log b,T + ap\log b]$  if $a\in(0,\frac{p}{1+p})$ for which $b^{-2p+a(2p+1)} \ll b^{-a}$ for sufficiently large $b$.
	\end{remark}

	\begin{proof}
		In order to eliminate the divergence of the integral kernel in the integral equation \eqref{eq:gamm_int_eq} as $t\to - \infty$, we introduce a change of variables: $\tilde{\Theta}_h(t) := e^{-\frac{t}{p}} \Theta_h(t)$ and  $\tilde{\gamma}(t) := e^{-\frac{t}{p}}\gamma(t)$. The new integral equation for $\tilde{\gamma}$ can be considered as the fixed-point equation $\tilde{\gamma} = A \tilde{\gamma}$, where 
		\begin{equation}
			(A\tilde{\gamma})(t) := \int_{-\infty}^t K(t,t')\left[f_b(t')(\tilde{\Theta}_h(t')+\tilde{\gamma}(t'))-
			e^{2t'}N(\tilde{\Theta}_h(t'),\tilde{\gamma}(t'))\right]dt',
			\label{eq:gamm_tild_int_eq}
		\end{equation}
		where the integral kernel is defined as
		\begin{equation*}
			K(t,t') := e^{\frac{t'}{p}}\Theta_h'(t')e^{-\frac{t}{p}}\Sigma(t) - e^{-\frac{t}{p}}\Theta_h'(t)e^{\frac{t'}{p}}\Sigma(t'), \quad t'\leq t.
		\end{equation*}
		It follows from the asymptotic behaviours \eqref{eq:Theta_asympt} and \eqref{eq:Sigma_asympt} for $\Theta_h$ and $\Sigma$ that 
		the integral kernel is bounded for every $t \in \mathbb{R}$ by 
		\begin{equation*}
			|K(t,t')|\leq C \left(1+e^{-\frac{2}{p}(t-t')}\right), \quad t' \leq t,
		\end{equation*}
		for some positive constant $C$. Thus, as the integration in \eqref{eq:gamm_tild_int_eq} is done in $t'$ from $-\infty$ to $t$, the kernel $K(t,t')$ is bounded. In addition, the lower bound on $\tilde{\Theta}_h$ follows from (\ref{eq:Theta_asympt}):
		\begin{equation*}
			\tilde{\Theta}_h(t) \geq C_{T,a} b^{-2a}, \quad t \in (-\infty,T+ap \log b),
		\end{equation*}
		for some positive constant $C_{T,a}$ that depends on $T$ and $a$ for all large $b$. We shall prove that the integral operator $A$ is a contraction in a small closed ball in the Banach space $L^\infty(-\infty,T+ap\log b)$ 
		equipped with the norm $\| \cdot \|_{\infty}$. \\
		
		\textbf{Case $p \in (0,\frac{1}{2})$.} Since $\tilde{\Theta}_h(t)$ is bounded from below for $t \in (-\infty,T+ap \log b)$, it follows by Proposition \ref{lemma:nonlin} if $\| \tilde{\gamma} \|_{\infty} \ll b^{-2a}$ for all large $b$, then
		\begin{equation}
\| N(\tilde{\Theta}_h,\tilde{\gamma}) \|_\infty\leq C\|\tilde{\gamma}\|_\infty^{2p+1},
			\label{eq:gamm_N_bound}	
		\end{equation}
		for some positive constant $C$. We use 
		$f_b(t) := b^{-2p} e^{2t} [ -\lambda + b^{-2p}e^{2t}]$ and estimate
		\begin{align*}
			\|A\tilde{\gamma}\|_\infty &\leq C \left[ \left(1+\|\tilde{\gamma}\|_\infty\right)\int_{-\infty}^{T+ap\log b}|f_b(t')|dt'+\|\tilde{\gamma}\|_\infty^{2p+1}\int_{-\infty}^{T+ap\log b}e^{2t'}dt' \right] \\
			&\leq C\left[ \left(1+\|\tilde{\gamma}\|_\infty\right)b^{-2p(1-a)} + b^{2ap}\|\tilde{\gamma}\|_\infty^{2p+1} \right],
		\end{align*}
		where the positive constant $C$ can change from one line to the other line. If $\|\tilde{\gamma}\|_\infty \leq 2Cb^{-2p(1-a)}$, then
		\begin{align*}
			\|A\tilde{\gamma}\|_\infty & \leq C \left[ b^{-2p(1-a)} + 2Cb^{-4p(1-a)} + (2C)^{2p+1}b^{-2p(1-2a(p+1)+2p)} \right] \\
			& \leq 2C b^{-2p(1-a)},
		\end{align*}
		where we have used $2p(1-a) < 2p(1-2a(p+1)+2p)$ if $a\in(0,\frac{2p}{1+2p})$. 
		Since $2a < 2p (1-a)$ if $a \in (0,\frac{p}{1+p})$ with $\frac{p}{1+p} < \frac{2p}{1+2p}$, the bound $\|\tilde{\gamma}\|_\infty \leq 2Cb^{-2p(1-a)}$ 
		ensures validity of the bound $\| \tilde{\gamma} \|_{\infty} \ll b^{-2a}$ for which the bound (\ref{eq:gamm_N_bound}) can be used.  Similar calculations show that for two functions $\tilde{\gamma}$ and $\tilde{\gamma}'$ in the same small closed ball in $L^{\infty}(-\infty,T+ap\log b)$, we have 
		\begin{equation*}
			\|A\tilde{\gamma}-A\tilde{\gamma}'\|_\infty \leq C b^{-2p(1-a)}\|\tilde{\gamma}-\tilde{\gamma}'\|_\infty,
		\end{equation*}
		so that $A$ is a contraction for sufficiently large $b$. By the Banach fixed-point theorem, there exists a unique fixed point $\tilde{\gamma}$ of $A$ such that
		\begin{equation*}
			\sup_{t\in(-\infty,T+ap\log b)}|\tilde{\gamma}(t)| \leq 2Cb^{-2p(1-a)}.
		\end{equation*}
		Since $\gamma(t)=e^{\frac{t}{p}}\tilde{\gamma}(t)$, we obtain  the bound \eqref{eq:gamma_bound} for the unique solution $\gamma(t)$ to the integral equation \eqref{eq:gamm_int_eq}.\\
		
		\textbf{Case $p \in [\frac{1}{2},1]$.} The only difference in the proof is that, by Proposition \ref{lemma:nonlin}, the bound \eqref{eq:gamm_N_bound} is replaced by the bound 
		\begin{equation}
\| N(\tilde{\Theta}_h,\tilde{\gamma}) \|_\infty \leq C\|\tilde{\gamma}\|_\infty^{2},
		\label{eq:gamm_N_bound-new}	
		\end{equation}
if $\| \tilde{\gamma} \|_{\infty} \ll b^{-2a}$ for all large $b$. In this case, we get the estimate 
		\begin{align*}
			\|A\tilde{\gamma}\|_\infty 
			&\leq C\left[ \left(1+\|\tilde{\gamma}\|_\infty\right)b^{-2p(1-a)} + b^{2ap}\|\tilde{\gamma}\|_\infty^{2} \right], \\
			& \leq C \left[ b^{-2p(1-a)} + 2Cb^{-4p(1-a)} + (2C)^{2} b^{-2p(2-3a)} \right] \\
			& \leq 2C b^{-2p(1-a)},
		\end{align*}
		where we have used $2p(1-a) < 2p(2-3a)$ if $a\in(0,\frac{1}{2})$. 
		Since $2a < 2p (1-a)$ if $a \in (0,\frac{p}{1+p})$ with $\frac{p}{1+p} \leq \frac{1}{2}$, the bound $\|\tilde{\gamma}\|_\infty \leq 2Cb^{-2p(1-a)}$ 
		ensures validity of the bound $\| \tilde{\gamma} \|_{\infty} \ll b^{-2a}$ for which the bound (\ref{eq:gamm_N_bound-new}) can be used. The rest of the proof is verbatim to the case of $p \in (0,\frac{1}{2})$.
	\end{proof}

	\begin{remark}
	The bound (\ref{eq:gamma_bound}) can be extended for every $p \geq 1$ if the values of $a$ are restricted to $a \in (0,\frac{1}{2})$ as follows 
	from the proof of Lemma \ref{lemma:gamm_est} in the case of $p \in [\frac{1}{2},1]$.
\end{remark}
	
	The result of Lemma \ref{lemma:gamm_est} allows us to justify the validity 
	of 
	$$
	\Psi_b(t-p \log b) \sim \Theta_h(t) \sim \alpha_p^{-\frac{1}{p}} e^{-\frac{t}{p}}, \quad t \in [a p \log b, T + a p \log b]
	$$ 
	due to (\ref{eq:Theta_asympt}) and (\ref{eq:gamma_bound}). In order to obtain 
	the correction term which behaves like $e^{\frac{t}{p}}$ in the same asymptotic region, we need to analyze $\gamma$ in more details 
	and obtain the leading-order part of $\gamma$. To do so, we write
	$\gamma = \gamma_h + \delta$, where the leading-order term $\gamma_h$ satisfies $\mathcal{L}\gamma_h = f_b\Theta_h$ and is given explicitly by
	\begin{equation}\label{eq:gamma1_def}
		\gamma_h(t) = \Sigma(t)\int_{-\infty}^tf_b(t')\Theta_h'(t')\Theta_h(t')dt' - \Theta_h'(t)\int_{-\infty}^tf_b(t')\Sigma(t')\Theta_h(t')dt',
	\end{equation}
	whereas the correction term $\delta$ satisfies
	\begin{equation}
			\label{eq:delta_int}
		\mathcal{L} \delta = f_b(\gamma_h+\delta) - N(\Theta_h,\gamma_h+\delta).
	\end{equation}
The following lemma gives a sharper bound on $\gamma_h$ compared to the bound (\ref{eq:gamma_bound}). The sharper bound holds on $[ap \log b,T + ap \log b]$, where the asymptotic behavior of $\Theta_h(t)$ and $\Sigma(t)$ as $t \to +\infty$ is relevant. 

\begin{lemma}
	\label{lem-aux}
	Fix $p \in (0,1)$ and $\lambda \in \mathbb{R}$. For any fixed $T > 0$ and $a \in (0,\frac{p}{1+p})$ there exist $b_{T,a}>0$ and $C_{T,a}>0$ such that 
	$\gamma_h$ in (\ref{eq:gamma1_def}) satisfies for $t\in [ap \log b,T+ap\log b]$ and all $b \geq b_{T,a}$:
	\begin{itemize}
	\item if $p \in (0,\frac{1}{2})$, then
	\begin{equation}
	|\gamma_h(t)| \leq C_{T,a} \left[ (|\lambda| b^{-2p(1-a)} + b^{-4p(1-a)}) e^{-\frac{t}{p}} + (|\lambda| b^{-2p} + b^{-4p}) e^{\frac{t}{p}} \right], 
	\label{eq:gamma_bound-sharper-1}
	\end{equation}
		\item if $p \in [\frac{1}{2},1)$, then
	\begin{equation}
	|\gamma_h(t)| \leq C_{T,a} \left[ (|\lambda| b^{-2p(1-a)} + b^{-4p(1-a)}) e^{-\frac{t}{p}} + (|\lambda| b^{-2p} + b^{-4p(1-a)}) e^{\frac{t}{p}} \right], 
	\label{eq:gamma_bound-sharper-2}
	\end{equation}
	\end{itemize}
	where the bounds can be differentiated in $t$.
\end{lemma}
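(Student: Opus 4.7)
The plan is to expand $f_b(t) = -\lambda b^{-2p}e^{2t} + b^{-4p}e^{4t}$ and decompose the formula (\ref{eq:gamma1_def}) as
\[
\gamma_h(t) = -\lambda b^{-2p}\gamma_h^{(1)}(t) + b^{-4p}\gamma_h^{(2)}(t), \qquad \gamma_h^{(j)}(t) = \Sigma(t) A_j(t) - \Theta_h'(t) B_j(t),
\]
where $A_j(t) := \int_{-\infty}^t e^{2jt'}\Theta_h'(t')\Theta_h(t')\,dt'$ and $B_j(t) := \int_{-\infty}^t e^{2jt'}\Sigma(t')\Theta_h(t')\,dt'$ for $j=1,2$. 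Since we work on the right-asymptotic window $t \in [ap\log b,\,T+ap\log b]$, we use the $t \to +\infty$ branches of (\ref{eq:Theta_asympt}) and (\ref{eq:Sigma_asympt}), yielding $|\Sigma(t)| \leq C e^{t/p}$ and $|\Theta_h'(t)| \leq C e^{-t/p}$ uniformly on this interval.

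For the $B_j$ integrals, the product $\Sigma(t')\Theta_h(t')$ is uniformly bounded on $\mathbb{R}$ by (\ref{eq:Theta_asympt})--(\ref{eq:Sigma_asympt}), so $|B_j(t)| \leq C \int_{-\infty}^t e^{2jt'}\,dt' = \tfrac{C}{2j} e^{2jt} \leq C_{T,a} b^{2jap}$. Combined with the $e^{-t/p}$ bound on $\Theta_h'(t)$ and the external prefactors $|\lambda|b^{-2p}$ or $b^{-4p}$, this produces the $e^{-t/p}$ contribution in both (\ref{eq:gamma_bound-sharper-1}) and (\ref{eq:gamma_bound-sharper-2}) with coefficient of order $|\lambda|b^{-2p(1-a)} + b^{-4p(1-a)}$.

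For the $A_j$ integrals, write $\Theta_h'(t')\Theta_h(t') = \tfrac{1}{2}(\Theta_h^2(t'))'$; this factor decays like $e^{2t'/p}$ as $t' \to -\infty$ and like $e^{-2t'/p}$ as $t' \to +\infty$, so the integral over $(-\infty,0]$ is an absolute $b$-independent constant and the contribution from $[0,t]$ is controlled by $\int_0^t e^{(2j-2/p)t'}\,dt'$. For $j=1$ the exponent $2-2/p$ is strictly negative on $p \in (0,1)$, giving $|A_1(t)| \leq C$ and hence $|\Sigma(t) A_1(t)| \leq C e^{t/p}$, which after multiplication by $|\lambda|b^{-2p}$ yields the $|\lambda|b^{-2p}e^{t/p}$ term. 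For $j=2$ with $p \in (0,\tfrac{1}{2})$ the exponent $4-2/p$ is still negative, giving $|A_2(t)| \leq C$ and the $b^{-4p}e^{t/p}$ term of (\ref{eq:gamma_bound-sharper-1}). For $p \in [\tfrac{1}{2},1)$ the integrand grows, and evaluating at $t = ap\log b + \mathcal{O}(1)$ produces $|A_2(t)| \leq C_{T,a}\, b^{ap(4-2/p)} = C_{T,a}\, b^{4ap - 2a}$, so the contribution from $b^{-4p}\Sigma(t) A_2(t)$ is of order $b^{-4p(1-a)-2a} e^{t/p}$, which is absorbed into the $b^{-4p(1-a)}e^{t/p}$ term of (\ref{eq:gamma_bound-sharper-2}).

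The main subtlety is the boundary case $p=\tfrac{1}{2}$, where the $j=2$ integral gives a linear factor $\int_0^t dt' = t = \mathcal{O}(\log b)$; this factor must be absorbed into $C_{T,a}$ using $\log b \leq C_\epsilon b^{\epsilon}$ for arbitrarily small $\epsilon$, which is precisely why the hypothesis $p \in [\tfrac{1}{2},1)$ together with $a \in (0,\tfrac{p}{1+p})$ suffices. Differentiability of the bound in $t$ follows by differentiating (\ref{eq:gamma1_def}) directly: the two boundary contributions involving $f_b(t)\Theta_h(t)$ cancel identically (they share the factor $\Sigma(t)\Theta_h'(t) - \Theta_h'(t)\Sigma(t) = 0$), leaving $\gamma_h'(t) = \Sigma'(t) A(t) - \Theta_h''(t) B(t)$; since $\Sigma'$ and $\Theta_h''$ share the asymptotic orders of $\Sigma$ and $\Theta_h'$ respectively, the same estimates apply.
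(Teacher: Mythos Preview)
Your argument is correct and follows essentially the same route as the paper: both proofs estimate the two integrals in (\ref{eq:gamma1_def}) separately, using the uniform boundedness of $\Sigma\Theta_h$ for the $B$-integral and the $e^{-2t/p}$ decay of $\Theta_h'\Theta_h$ for the $A$-integral, then read off the $e^{\pm t/p}$ prefactors from the asymptotics (\ref{eq:Theta_asympt}) and (\ref{eq:Sigma_asympt}). Your version is a bit more explicit---you split the $A_j$ integral at $t'=0$, treat the borderline case $p=\tfrac12$ (where $4-2/p=0$ produces a logarithmic factor absorbed by $\log b \le C_a b^{2a}$), and verify the differentiability claim via the cancellation of the boundary terms---whereas the paper's intermediate inequality $\int_{-\infty}^{T+ap\log b} e^{-\frac{2(1-p)}{p}t}\,dt$ is written somewhat loosely (the integrand is not integrable at $-\infty$ as stated, and the $\log b$ at $p=\tfrac12$ is suppressed), but both arrive at the same conclusion.
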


\begin{proof}
Since $\Sigma(t) \Theta_h(t)$ is bounded for every $t \in \mathbb{R}$ independently of $b$, the second integral term in (\ref{eq:gamma1_def}) is controlled by 
\begin{align*}
\left| \int_{-\infty}^{T+ap\log b}f_b(t)\Sigma(t)\Theta_h(t)dt \right| &\leq C \int_{-\infty}^{T+ap\log b}\left( |\lambda| b^{-2p}e^{2t} + b^{-4p}e^{4t}\right)dt \\
& \leq C_{T,a}  (|\lambda| b^{-2p(1-a)} + b^{-4p(1-a)}).
\end{align*}
This estimate yields the first term in the bounds (\ref{eq:gamma_bound-sharper-1}) and (\ref{eq:gamma_bound-sharper-2}) due to 
the asymptotic behavior (\ref{eq:Theta_asympt}). On the other hand, 
since $\Theta_h(t)^2 = \mathcal{O}(e^{-\frac{2t}{p}})$ as $t \to +\infty$, 
the first integral term in (\ref{eq:gamma1_def}) is controlled by 
\begin{align*}
\left| \int_{-\infty}^{T+ap\log b}f_b(t)\Theta_h'(t)\Theta_h(t)dt \right| &\leq C \int_{-\infty}^{T+ap\log b}\left( |\lambda| b^{-2p}e^{-\frac{2(1-p)}{p}t} + b^{-4p}e^{-\frac{2(1-2p)}{p}t}\right)dt \\
& \leq C_{T,a}  (|\lambda| b^{-2p} + b^{-4p+2a \nu_p}),
\end{align*}	
where $\nu_p = 0$ for $p \in (0,\frac{1}{2})$ and $\nu_p = 2p-1$ for $p \in [\frac{1}{2},1)$. This yields the second term in the bounds (\ref{eq:gamma_bound-sharper-1}) and (\ref{eq:gamma_bound-sharper-2}) due to 
the asymptotic behavior (\ref{eq:Sigma_asympt}), where we have also used that 
$4p(1-a) < 4p - 2a (2p-1)$.
\end{proof}
	
The sharper bounds (\ref{eq:gamma_bound-sharper-1}) and (\ref{eq:gamma_bound-sharper-2}) are compatible with the bound 
(\ref{eq:gamma_bound}) on the semi-infinite interval $(-\infty,T+ap \log b]$, which can be rewritten in the form:
\begin{align}
|\gamma_h(t)| \leq C_{T,a} b^{-2p(1-a)} e^{\frac{t}{p}}, \qquad t \in (-\infty,T+ap \log b].
\label{eq:gamm_1_est}
\end{align}
The correction term $\delta$ is estimated to be smaller than $\gamma_h$ according to the following lemma. 
	
	\begin{lemma}\label{lemma:gamma_est_hot}
		Fix $p \in (0,1]$ and $\lambda\in\mathbb{R}$. For any fixed $T>0$, $a\in(0,\frac{p}{1+p})$, there exist $b_{T,a}>0$ and $C_{T,a}>0$ such that for $t \in (-\infty,T+ap\log b]$ and all $b \geq b_{T,a}$:
		\begin{itemize}
			\item if $p \in (0,\frac{1}{2})$, then
			\begin{equation}\label{eq:gamm2_est_c1}
|\Psi_b(t-p\log b)-\Theta_h(t)-\gamma_h(t)| \leq C_{T,a} b^{-2p\left[ (2p+1)(1-a) - a\right]} e^{\frac{t}{p}}, 
			\end{equation}
			\item if $p \in [\frac{1}{2},1]$, then
			\begin{equation}\label{eq:gamm2_est_c2}
|\Psi_b(t-p\log b)-\Theta_h(t)-\gamma_h(t)| \leq C_{T,a}b^{-2p(2-3a)} e^{\frac{t}{p}},
			\end{equation}
		\end{itemize}
where the bounds can be differentiated in $t$.
	\end{lemma}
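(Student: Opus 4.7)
The plan is to run a contraction argument for $\delta := \gamma - \gamma_h = \Psi_b(\cdot - p\log b) - \Theta_h - \gamma_h$ that parallels the proof of Lemma \ref{lemma:gamm_est}, but now exploiting that $\gamma_h$ is already known to be small by \eqref{eq:gamm_1_est}. Subtracting the integral representation \eqref{eq:gamma1_def} of $\gamma_h$ from the integral equation \eqref{eq:gamm_int_eq} for $\gamma$ yields
\[
\delta(t) = \int_{-\infty}^{t}\bigl(\Theta_h'(t')\Sigma(t)-\Theta_h'(t)\Sigma(t')\bigr)\bigl[f_b(t')(\gamma_h+\delta)(t') - N(\Theta_h(t'),(\gamma_h+\delta)(t'))\bigr]\,dt'.
\]
Setting $\tilde\delta(t) := e^{-t/p}\delta(t)$ and $\tilde\gamma_h := e^{-t/p}\gamma_h$ as in \eqref{eq:gamm_tild_int_eq} removes the divergence of the kernel at $-\infty$ and recasts the identity as a fixed-point equation $\tilde\delta = B\tilde\delta$ governed by the same bounded kernel $K(t,t')$ that was used for Lemma \ref{lemma:gamm_est}.

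I would then apply Banach's fixed-point theorem on a closed ball $\bar{B}_R \subset L^\infty(-\infty,T+ap\log b)$ whose radius $R$ is chosen case by case. Using the integral bounds $\int_{-\infty}^{T+ap\log b}|f_b(t')|\,dt' \le C_{T,a}b^{-2p(1-a)}$ and $\int_{-\infty}^{T+ap\log b}e^{2t'}\,dt'\le C_{T,a}b^{2ap}$, the sharper bound $\|\tilde\gamma_h\|_\infty \le C_{T,a}b^{-2p(1-a)}$ (rewriting of \eqref{eq:gamm_1_est}), and Proposition \ref{lemma:nonlin} applied to $N(\tilde\Theta_h,\tilde\gamma_h+\tilde\delta)$, one gets
\[
\|B\tilde\delta\|_\infty \le C\bigl[(\|\tilde\gamma_h\|_\infty + R)\,b^{-2p(1-a)} + b^{2ap}(\|\tilde\gamma_h\|_\infty + R)^{\mu}\bigr],
\]
with $\mu = 2p+1$ if $p\in(0,\tfrac12)$ and $\mu = 2$ if $p\in[\tfrac12,1]$. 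Expanding the $\mu$-th power and comparing exponents shows that the dominant term on the right is $b^{-2p[(2p+1)(1-a)-a]}$ in the first case and $b^{-2p(2-3a)}$ in the second; in particular the competing contribution $b^{-4p(1-a)}$ coming from $\|\tilde\gamma_h\|_\infty\cdot b^{-2p(1-a)}$ is strictly smaller than both. I would therefore take $R = 2Cb^{-2p[(2p+1)(1-a)-a]}$, respectively $R = 2Cb^{-2p(2-3a)}$.

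Checking that $B$ maps $\bar B_R$ to itself and that $B$ is a contraction reduces in both cases to verifying $b^{2ap}R^{\mu-1}=o(1)$ and $Rb^{-2p(1-a)}=o(R)$; these are straightforward inequalities in the exponents which follow from $a\in(0,p/(1+p))$ (and $a<\tfrac12$, which is automatic since $p/(1+p)\le\tfrac12$ when $p\le 1$). The same restriction $a<p/(1+p)$ ensures $R\ll\|\tilde\gamma_h\|_\infty\ll b^{-2a}\lesssim \tilde\Theta_h$ on the relevant interval, which is exactly the smallness required to invoke Proposition \ref{lemma:nonlin}. Undoing the rescaling $\delta=e^{t/p}\tilde\delta$ then gives the two stated bounds \eqref{eq:gamm2_est_c1} and \eqref{eq:gamm2_est_c2}; differentiability of the bound follows by differentiating the integral representation and observing that $\partial_t K(t,t')$ satisfies an estimate of the same form.

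The main obstacle is the exponent bookkeeping: one must confirm that the chosen $R$ really does dominate every other term in $\|B\tilde\delta\|_\infty$, that the superlinear (respectively quadratic) feedback $b^{2ap}R^\mu$ stays comfortably below $R$ under the stated range of $a$, and that the crossover at $p=\tfrac12$ is responsible for the switch between the two different exponents in the conclusion. Once these elementary but careful comparisons are made, the fixed-point step is mechanical and the remaining assertions (uniqueness and differentiability of $\tilde\delta$) are inherited from the Banach argument just as in Lemma \ref{lemma:gamm_est}.
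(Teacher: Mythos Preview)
Your proposal is correct and follows essentially the same route as the paper: set up the integral equation for $\delta$, rescale by $e^{-t/p}$, and run a Banach fixed-point argument in $L^\infty(-\infty,T+ap\log b)$ with the source term coming from $\tilde\gamma_h$. The paper organizes the nonlinear term slightly differently, splitting $N(\tilde\Theta_h,\tilde\gamma_h+\tilde\delta)=N(\tilde\Theta_h,\tilde\gamma_h)+\tilde N(\tilde\Theta_h,\tilde\gamma_h,\tilde\delta)$ and bounding the first piece by $C\|\tilde\gamma_h\|_\infty^{\mu}$ and the second by $C\|\tilde\gamma_h\|_\infty^{\mu-1}\|\tilde\delta\|_\infty$, whereas you bound the whole thing by $C(\|\tilde\gamma_h\|_\infty+R)^{\mu}$; both lead to the same exponents.

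One small imprecision: the contraction step is governed by the Lipschitz constant $b^{2ap}(\|\tilde\gamma_h\|_\infty+R)^{\mu-1}\sim b^{2ap}\|\tilde\gamma_h\|_\infty^{\mu-1}$, not $b^{2ap}R^{\mu-1}$ as you wrote (since $\|\tilde\gamma_h\|_\infty\gg R$). This does not affect the conclusion, because $b^{2ap}\|\tilde\gamma_h\|_\infty^{\mu-1}\le Cb^{-2p[2p(1-a)-a]}$ for $p<\tfrac12$ and $\le Cb^{-2p(1-2a)}$ for $p\ge\tfrac12$, and both are $o(1)$ under $a<p/(1+p)\le\tfrac12$.
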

	
	\begin{remark}
		\label{rmrk:a_restr}
		If $a \in (0,\frac{2p}{1+2p})$ for $p \in (0,\frac{1}{2})$, we have 
		$$
2p(1-a) < 2p \left[(2p+1)(1-a)-a\right],
		$$
so that comparison (\ref{eq:gamm_1_est}) and \eqref{eq:gamm2_est_c1}  
shows		that $\delta$ is smaller than $\gamma_h$ 
		for sufficiently large $b$. Similarly, if $a \in (0,\frac{1}{2})$ for $p \geq \frac{1}{2}$, we have 
		$$
2p(1-a) < 2p(2-3a),
		$$
		so that the comparison of (\ref{eq:gamm_1_est}) and \eqref{eq:gamm2_est_c2}  shows that $\delta$ is smaller than $\gamma_h$ 
		for sufficiently large $b$. In both cases, by Lemma \ref{lemma:gamm_est},  we also have 
		$\gamma_h$ being smaller than $\Theta_h$ if $a \in (0,\frac{p}{1+p})$, where $\frac{p}{1+p} \leq \min\{ \frac{2p}{1+2p},\frac{1}{2}\}$ if $p \in (0,1]$.
	\end{remark}
	
	\begin{proof}
Equation (\ref{eq:delta_int}) for $\delta$ can be written similarly to (\ref{eq:gamm_int_eq}) as the integral equation
\begin{equation}	
		\delta(t) = \int_{-\infty}^t \left(\Theta_h'(t')\Sigma(t)-\Theta_h'(t)\Sigma(t')\right)\left[f_b(t')\left(\gamma_h(t')+\delta(t')\right)-N\left(\Theta_h(t'),\gamma_h(t')+\delta(t')\right)\right]dt'.
		\label{eq:delta}
\end{equation}
We proceed in a similar way to the proof of Lemma \ref{lemma:gamm_est}. 
		Using the change of variables  
$$
\tilde{\Theta}_h(t) := e^{-\frac{t}{p}} \Theta_h(t), \quad 
\tilde{\gamma}_h(t) := e^{-\frac{t}{p}} \gamma_h(t), \quad 
\tilde{\delta}(t) :=e^{-\frac{t}{p}}\delta(t),
$$ 
		we rewrite the integral equation for $\delta$ as 
		the fixed-point equation $\tilde{\delta} = B \tilde{\delta}$, 
		where 
		\begin{equation}
			(B\tilde{\delta})(t) := \int_{-\infty}^t K(t,t')\left[f_b(t')(\tilde{\gamma}_h(t')+\tilde{\delta}(t')) -e^{2t'} N(\tilde{\Theta}_h(t'),\tilde{\gamma}_h(t')+\tilde{\delta}(t'))\right] dt'.
			\label{eq:gamm_2_tild_int_eq}
		\end{equation}
		The only essential difference between $A$ in \eqref{eq:gamm_tild_int_eq} and $B$ in \eqref{eq:gamm_2_tild_int_eq} is the source term which dictates the size of the closed ball in $L^\infty(-\infty,T+ap\log b]$, where the fixed-point iterations are closed. In \eqref{eq:gamm_2_tild_int_eq}, it consists of the linear term $f_b \tilde{\gamma}_h$ and the contribution from nonlinearity term $N(\tilde{\Theta}_h,\tilde{\gamma}_h)$. The linear term is estimated from (\ref{eq:gamm_1_est}) as 
		\begin{equation}\label{eq:gamm1_tild_fb_est}
			\left| \int_{-\infty}^{T+ap\log b} f_b(t)\tilde{\gamma}_h(t)dt \right| \leq C\|\tilde{\gamma}_h\|_\infty \int_{-\infty}^{T+ap\log b} |f_b(t)| dt \leq C b^{-4p(1-a)}.
		\end{equation}
		Estimates for the nonlinear term depend on the value of $p$. To proceed with the estimates, we decompose 
		\begin{equation*}
			N(\tilde{\Theta}_h,\tilde{\gamma}_h+\tilde{\delta}) = N(\tilde{\Theta}_h,\tilde{\gamma}_h) + \tilde{N}(\tilde{\Theta}_h,\tilde{\gamma}_h,\tilde{\delta}),
		\end{equation*}
		where
		\begin{align*}
			\tilde{N}(\tilde{\Theta}_h,\tilde{\gamma}_h,\tilde{\delta}) &:= N(\tilde{\Theta}_h,\tilde{\gamma}_h+\tilde{\delta}) - N(\tilde{\Theta}_h,\tilde{\gamma}_h) \\ 
			&= (\tilde{\Theta}_h+\tilde{\gamma}_h+\tilde{\delta})^{2p+1} - (\tilde{\Theta}_h +\tilde{\gamma}_h)^{2p+1}-(2p+1)(\tilde{\Theta}_h)^{2p} \tilde{\delta}.
		\end{align*}
		
		\textbf{Case $p \in (0,\frac{1}{2})$.} By Proposition \ref{lemma:nonlin}, we have 
		\begin{equation}\label{eq:N1_c1_est}
			\|N(\tilde{\Theta}_h,\tilde{\gamma}_h)\|_\infty \leq C'\|\tilde{\gamma}_h\|_\infty^{2p+1}.
		\end{equation}
		Since $\tilde{N} |_{\delta = 0} = 0$ and 
		\begin{equation*}
			\frac{\partial \tilde{N}}{\partial \tilde{\delta}}\bigg|_{\tilde{\delta}=0} = (2p+1)(\tilde{\Theta}_h+\tilde{\gamma}_h)^{2p} - (2p+1)(\tilde{\Theta}_h)^{2p},
		\end{equation*}
we obtain by a minor modification of the proof of Proposition \ref{lemma:nonlin}  that 
		\begin{equation}\label{eq:N2_c1_est}
			\|\tilde{N}(\tilde{\Theta}_h,\tilde{\gamma}_h,\tilde{\delta}) \|_\infty \leq C \|\tilde{\gamma}_h\|_\infty^{2p}\|\tilde{\delta}\|_\infty.
		\end{equation}
		Putting together estimates  \eqref{eq:gamm1_tild_fb_est}, \eqref{eq:N1_c1_est}, and \eqref{eq:N2_c1_est}, we obtain that 
		\begin{align*}
			\| B \tilde{\delta} \|_{\infty} \leq C \left(b^{-4p(1-a)} + b^{-2p(1-a)}\|\tilde{\delta}\|_\infty + b^{-2p [(2p+1)(1-a)-a]} + b^{-2p\left[2p(1-a)-a\right]}\|\tilde{\delta}\|_\infty\right).
		\end{align*}
Since $4p(1-a) > 2p [(2p+1)(1-a)-a]$ for every $p \in (0,\frac{1}{2})$, we have $b^{-4p(1-a)} \ll b^{-2p [(2p+1)(1-a)-a]}$ for sufficiently large $b$, hence the source term coming from the nonlinearity $N(\tilde{\Theta}_h,\tilde{\gamma}_h)$ is much larger than the source 
term coming from $f_b \tilde{\gamma}_h$ as $b\to\infty$. 
As a result, if $\| \tilde{\delta}\|_{\infty} \leq 2C b^{-2p [2p(1-a)-a]}$, 
then $\| B \tilde{\delta} \|_{\infty} \leq 2C b^{-2p [2p(1-a)-a]}$. 
Moreover, $B$ is a contraction in the same small closed ball in  $L^\infty(-\infty,T+ap\log b)$ for sufficiently large $b$. Hence, there exists a unique fixed point $\tilde{\delta}$ of $B$ satisfying
$\|\tilde{\delta} \|_{\infty} \leq 2 C b^{-2p [(2p+1)(1-a)-a]}$,
which yields \eqref{eq:gamm2_est_c1} for $\delta(t) = e^{\frac{t}{p}} \tilde{\delta}(t)$.\\

\textbf{Case $p \in [\frac{1}{2},1]$.}  By Proposition \ref{lemma:nonlin}, we have
		\begin{equation*}
			\|N(\tilde{\Theta}_h,\tilde{\gamma}_h)\|_\infty \leq C \|\tilde{\gamma}_h\|_\infty^2,
		\end{equation*}
		and similarly,
		\begin{equation*}
			\| \tilde{N}(\tilde{\Theta}_h,\tilde{\gamma}_h,\tilde{\delta}) \|_\infty \leq C \|\tilde{\gamma}_h\|_\infty\|\tilde{\delta}\|_\infty.
		\end{equation*}
		Proceeding similarly to the previous computations, we obtain
		\begin{align*}
			\| B \tilde{\delta} \|_{\infty} \leq C' \left(b^{-4p(1-a)} + b^{-2p(1-a)}\|\tilde{\delta}\|_\infty + b^{-2p(2-3a)} + b^{-2p(1-2a)}\|\tilde{\delta}\|_\infty\right).
		\end{align*}
Since $4p(1-a) > 2p(2-3a)$, we have $b^{-4p(1-a)} \ll b^{-2p(2-3a)}$ for sufficiently large $b$, hence again 
		the source term coming from the nonlinearity 
		$N(\tilde{\Theta}_h,\tilde{\gamma}_h)$ is much larger than the source 
		term coming from $f_b \tilde{\gamma}_h$ as $b\to\infty$. 
		Proceeding similarly, for sufficiently large $b$, there exists a unique fixed point $\tilde{\delta}$ of $B$ satifying 
$\|\tilde{\delta}\|_{\infty} \leq 2 C b^{-2p(2-3a)}$,
which  yields \eqref{eq:gamm2_est_c2} for $\delta(t) = e^{\frac{t}{p}} \tilde{\delta}(t)$.
	\end{proof}

Similarly to Lemma \ref{lem-aux}, we can find a sharper bound on $\delta$ compared to the bounds (\ref{eq:gamm2_est_c1}) and (\ref{eq:gamm2_est_c2}). 
This is given by the following lemma, the proof of which follows from 
the estimates obtained in Lemma \ref{lemma:gamma_est_hot}.

\begin{lemma}
	\label{lem-aux-new}
	Fix $p \in (0,1)$ and $\lambda \in \mathbb{R}$. For any fixed $T > 0$ and $a \in (0,\frac{p}{1+p})$ there exist $b_{T,a}>0$ and $C_{T,a}>0$ such that 
	$\delta$ in (\ref{eq:delta}) satisfies for $t\in [ap \log b,T+ap\log b]$ and all $b \geq b_{T,a}$:
\begin{itemize}
	\item if $p \in (0,\frac{1}{2})$, then
	\begin{equation}
	|\delta(t)| \leq C_{T,a} (|\lambda| b^{-2p} + b^{-4p})^{2p+1} b^{2ap} e^{\frac{t}{p}}, 
	\label{eq:delta_bound-sharper-1}
	\end{equation}
	\item if $p \in [\frac{1}{2},1)$, then
	\begin{equation}
	|\gamma_h(t)| \leq C_{T,a} (|\lambda| b^{-2p} + b^{-4p(1-a)})^{2} b^{2ap} e^{\frac{t}{p}},
	\label{eq:delta_bound-sharper-2}
	\end{equation}
\end{itemize}
where the bounds can be differentiated in $t$.
\end{lemma}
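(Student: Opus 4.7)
The plan is to follow the fixed-point argument used in the proof of Lemma \ref{lemma:gamma_est_hot}, but to replace the crude uniform bound $\|\tilde\gamma_h\|_\infty \leq Cb^{-2p(1-a)}$ by the sharper pointwise bound on $\tilde\gamma_h$ implicit in the representation (\ref{eq:gamma1_def}) (essentially the same estimates that underlie Lemma \ref{lem-aux}). Starting from the integral equation (\ref{eq:delta}), I would split
\[
\delta(t) = \Sigma(t) A(t) - \Theta_h'(t) B(t),
\]
where $A(t)$ and $B(t)$ are the integrals from $-\infty$ to $t$ with integrands $\Theta_h'(t')[f_b(\gamma_h+\delta)-N(\Theta_h,\gamma_h+\delta)]$ and $\Sigma(t')[\cdot]$ respectively. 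On $[ap\log b, T+ap\log b]$, the asymptotic behaviors (\ref{eq:Theta_asympt})--(\ref{eq:Sigma_asympt}) give $|\Sigma(t)| \sim e^{t/p}$ and $|\Theta_h'(t)| \sim e^{-t/p}$, so the $\Sigma(t) A(t)$ piece is dominant and produces the $e^{t/p}$ factor appearing in (\ref{eq:delta_bound-sharper-1})--(\ref{eq:delta_bound-sharper-2}).

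The key step is the estimate of $A(t)$. Its dominant contribution is the nonlinear term $N(\Theta_h,\gamma_h)$, for which Proposition \ref{lemma:nonlin} yields $|N(\Theta_h,\gamma_h)| \leq C|\gamma_h|^{2p+1}$ for $p\in(0,\tfrac{1}{2})$ and $|N(\Theta_h,\gamma_h)| \leq C|\gamma_h|^{2}$ for $p\in[\tfrac{1}{2},1)$. After the rescaling $\tilde\gamma_h = e^{-t/p}\gamma_h$ used in (\ref{eq:gamm_2_tild_int_eq}), the nonlinear contribution to $A(t)$ is controlled by $\int_{-\infty}^t e^{2t'}|\tilde\gamma_h(t')|^{2p+1}\,dt'$ (respectively with exponent $2$). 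I would split this integral as $\int_{-\infty}^{0} + \int_{0}^{t}$. On $(-\infty,0]$, the representation (\ref{eq:gamma1_def}) combined with (\ref{eq:Theta_asympt})--(\ref{eq:Sigma_asympt}) yields the pointwise decay $|\tilde\gamma_h(t')| \lesssim |\lambda|b^{-2p} e^{2t'}+b^{-4p}e^{4t'}$, so that this portion of the integral is automatically smaller than the target. On $[0, T+ap\log b]$, the same pointwise analysis (which is exactly the content of Lemma \ref{lem-aux} on its sub-interval) gives $|\tilde\gamma_h(t')| \lesssim |\lambda|b^{-2p}+b^{-4p(\cdot)}$ with $(\cdot) = 1$ for $p<\tfrac12$ and $(\cdot)=1-a$ for $p\geq\tfrac12$; combined with $\int_{0}^{T+ap\log b} e^{2t'}\,dt' \sim b^{2ap}$, this produces the target bound $|A(t)| \lesssim (|\lambda|b^{-2p}+b^{-4p(\cdot)})^{2p+1 \text{ or } 2}\,b^{2ap}$.

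The remaining contributions to $A$ are subdominant. The linear $f_b\gamma_h$ piece contributes at most $(|\lambda|b^{-2p(1-a)}+b^{-4p(1-a)}) \cdot \|\tilde\gamma_h\|_\infty$, and the $f_b \delta$ and cross-nonlinearity $\tilde N(\Theta_h,\gamma_h,\delta) \lesssim |\gamma_h|^{2p}|\delta|$ (respectively $|\gamma_h||\delta|$) are controlled using the uniform bounds of Lemma \ref{lemma:gamma_est_hot}; by the exponent comparisons already recorded in Remark \ref{rmrk:a_restr}, each of these is smaller than the main nonlinear term. The piece $\Theta_h'(t)B(t)$ carries an additional factor $e^{-2t/p} \lesssim b^{-2a}$ relative to $\Sigma(t)A(t)$ and is therefore smaller. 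Differentiability of the bound in $t$ follows from differentiating the identity $\delta = \Sigma A - \Theta_h' B$. I expect the main technical obstacle to be the careful propagation of the pointwise (rather than $L^\infty$) bound on $\gamma_h$ across the semi-infinite interval $(-\infty,ap\log b]$ on which Lemma \ref{lem-aux} does not apply directly, and ensuring that the sharper exponent $b^{2ap}$ — rather than the cruder $b^{2ap(2p+1)}$ that would arise from using the uniform bound alone — is recovered in both regimes $p<\tfrac{1}{2}$ and $p\geq\tfrac{1}{2}$.
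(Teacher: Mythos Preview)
Your overall strategy matches the paper's: re-run the fixed-point argument of Lemma~\ref{lemma:gamma_est_hot} with the sharper input on $\gamma_h$ coming from the analysis behind Lemma~\ref{lem-aux}, so that the source term $N(\Theta_h,\gamma_h)$ is controlled by $(|\lambda|b^{-2p}+b^{-4p(\cdot)})^{2p+1\text{ or }2}$ rather than by $(b^{-2p(1-a)})^{2p+1\text{ or }2}$.

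The paper's execution is simpler than your outline in two respects. First, rather than splitting $\delta=\Sigma A-\Theta_h'B$ and the domain of integration into $(-\infty,0]\cup[0,t]$, the paper observes directly that the uniform bound
\[
|\tilde\gamma_h(t)|\le C_{T,a}\bigl(|\lambda|b^{-2p}+b^{-4p(\cdot)}\bigr),\qquad t\in(-\infty,T+ap\log b],
\]
holds on the \emph{entire} semi-infinite interval (not just on $[ap\log b,T+ap\log b]$ as stated in Lemma~\ref{lem-aux}); this is an immediate consequence of the representation (\ref{eq:gamma1_def}) and the decay of the integrands as $t\to-\infty$. With this single $L^\infty$ bound in hand, the proof is literally the argument of Lemma~\ref{lemma:gamma_est_hot} with $b^{-2p(1-a)}$ replaced by $|\lambda|b^{-2p}+b^{-4p(\cdot)}$, and no splitting is needed. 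The ``technical obstacle'' you flag at the end is thus dispatched in one line.

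Second, your claim that $\Theta_h'(t)B(t)$ carries an extra factor $e^{-2t/p}\lesssim b^{-2a}$ relative to $\Sigma(t)A(t)$ is not correct: a direct estimate gives $|\Theta_h'(t)B(t)|\lesssim \mu^{2p+1}e^{(2p+1)t/p}$, which on $[ap\log b,T+ap\log b]$ is the \emph{same} order as the target $\mu^{2p+1}b^{2ap}e^{t/p}$, not smaller. This does not spoil the final bound (both pieces land at the right size), but it does mean your stated reason for discarding that term is wrong. Working with the bounded kernel $K(t,t')$ of (\ref{eq:gamm_2_tild_int_eq}), as the paper does, avoids this separate bookkeeping altogether.
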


\begin{proof}
	For $p \in (0,\frac{1}{2})$, the first term in the bound (\ref{eq:gamma_bound-sharper-1}) with $e^{-\frac{t}{p}}$ is much smaller than the second term with $e^{\frac{t}{p}}$ on $[ap \log b, T + ap \log b]$. As a result, it can be neglected. On the other hand, the second term in the bound (\ref{eq:gamma_bound-sharper-1}) can be extended for the semi-infinite interval $(-\infty,T + ap \log b]$ such that 
	the sharper bound compared to (\ref{eq:gamma_bound}) can be written in the form 
		\begin{equation*}
|\gamma_h(t)| \leq C_{T,a} (|\lambda| b^{-2p} + b^{-4p}) e^{\frac{t}{p}}, \quad t \in (-\infty,T + a p \log b].
\end{equation*}	
The bound (\ref{eq:delta_bound-sharper-1}) follows from 
analysis of the integral equation (\ref{eq:delta}) by using the transformation to the tilde variables 
in the proof of Lemma \ref{lemma:gamma_est_hot} and the estimate (\ref{eq:N1_c1_est}) on the nonlinear term $N$ which is much larger than the source term from $f_b$.

For $p \in [\frac{1}{2},1)$, the proof is analogous but we use 
		\begin{equation*}
|\gamma_h(t)| \leq C_{T,a} (|\lambda| b^{-2p} + b^{-4p(1-a)}) e^{\frac{t}{p}}, \quad t \in (-\infty,T + a p \log b].
\end{equation*}	
and the estimate (\ref{eq:N2_c1_est}) on the nonlinear term $N$ which is still much larger than the source term from $f_b$.
\end{proof}
	
	\section{Persistence of the $c$-family of solutions}
	\label{sec-4}
	
	The $c$-family of solutions $\Psi_c$ of the differential equation (\ref{eq:Psi_eq}) satisfying \eqref{far-field} is considered near the solution $\Upsilon_h$ of the linear equation (\ref{eq:Upsilon_eq}) given by (\ref{eq:Ups_h}). The comparison gives $\Psi_c(t) \sim c \Upsilon_h(t)$ as $t \to +\infty$. The correction term $\eta(t):=\Psi_c(t)-c \Upsilon_h(t)$ satisfies
	\begin{equation}
		\label{eq:eta}
		\mathcal{M} \eta =  -\left| c \Upsilon_h +\eta \right|^{2p} (c \Upsilon_h +\eta),
	\end{equation}
	where
	\[
	\left(\mathcal{M} \eta\right)(t) := \eta''(t)-\frac{1}{p^2}\eta(t)+\lambda e^{2t}\eta(t) -e^{4t}\eta(t).
	\]
	The homogeneous equation $\mathcal{M} \eta = 0$ has two linearly independent solutions. One solution is $\Upsilon_h$ given by \eqref{eq:Ups_h}. The other solution, denoted as $\Upsilon_g$, can be obtained from the normalized Wronskian relation
	\begin{equation}
		\label{eq:Ups_wronsk}
		\Upsilon_h(t)\Upsilon_g'(t)-\Upsilon_h'(t)\Upsilon_g(t) = 1.
	\end{equation}
	Since it follows from (\ref{far-field}) that 
	\begin{equation}
		\label{eq:Ups_h_lrg_t}
		\Upsilon_h(t) \sim e^{-\left(1-\frac{\lambda}{2}\right)t}e^{-\frac{1}{2}e^{2t}}, \quad \text{ as }t\to+\infty,
	\end{equation}
	integrating the Wronskian relation \eqref{eq:Ups_wronsk} yields
	\begin{equation}
		\label{eq:Ups_g_lrg_t}
		\Upsilon_g(t) \sim \frac{1}{2}e^{-\left(1+\frac{\lambda}{2}\right)t}e^{\frac{1}{2}e^{2t}}, \quad \text{ as }t\to+\infty.
	\end{equation}
With two linearly independent solutions $\Upsilon_h$ and $\Upsilon_g$, we rewrite \eqref{eq:eta} as an integral equation for $\eta$:
	\begin{equation}
		\label{eq:eta_int}
		\eta(t) = \int_t^{\infty}\left(\Upsilon_h(t')\Upsilon_g(t)-\Upsilon_h(t)\Upsilon_g(t')\right) |c \Upsilon_h(t')+\eta(t')|^{2p} (c \Upsilon_h(t')+\eta(t') ) dt',
	\end{equation}
	where the free solution $c_1\Upsilon_h+c_2\Upsilon_g$ has been set to zero in order to guarantee that $\eta(t)$ decays to zero as $t \to +\infty$ faster than $\Upsilon_h(t)$. 
	
	The following lemma describes the size of $\eta(t)$ for $t \in [0,\infty)$.
	
	\begin{lemma}
		\label{lemma:c_sol_est}
		Fix $\lambda\in(-\infty,2]$ and $p>0$. Then, there exist some constants $C >0$ and $c_0>0$, such that for $t \in [0,\infty)$ and 
		$c \in (-c_0,c_0)$, we have
		\begin{equation}
			\label{eq:Psi_c_est}
|\Psi_c(t)-c \Upsilon_h(t)|\leq C |c|^{2p+1} e^{-(1-\frac{\lambda}{2}) t} e^{-\frac{1}{2} e^{2t}}, 
		\end{equation}
		where the bound can be differentiated term by term. 
	\end{lemma}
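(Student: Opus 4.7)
The plan is to solve the integral equation \eqref{eq:eta_int} for $\eta$ by a contraction mapping argument in a weighted $L^\infty$ space whose weight captures the pointwise envelope predicted by \eqref{eq:Psi_c_est}. The first step I would take is to promote the one-sided asymptotics \eqref{eq:Ups_h_lrg_t} and \eqref{eq:Ups_g_lrg_t} to uniform bounds on all of $[0,\infty)$, using that both $\Upsilon_h$ and $\Upsilon_g$ are continuous and have the prescribed leading behavior at infinity. This produces a constant $C > 0$ such that for every $t \geq 0$,
$$
|\Upsilon_h(t)| \leq C e^{-(1-\frac{\lambda}{2})t} e^{-\frac{1}{2}e^{2t}}, \qquad |\Upsilon_g(t)| \leq C e^{-(1+\frac{\lambda}{2})t} e^{\frac{1}{2}e^{2t}}.
$$
The restriction $\lambda \leq 2$ ensures that $e^{-(1-\lambda/2)t}$ is non-increasing, which in turn guarantees that the relevant integrals below are uniformly bounded in $t \geq 0$.

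Next I would introduce the weight $w(t) := e^{-(1-\frac{\lambda}{2})t} e^{-\frac{1}{2}e^{2t}}$ and the closed ball $B_R := \{ \eta \in C([0,\infty)) : |\eta(t)| \leq R\, w(t)\text{ for all } t \geq 0\}$ with $R = C_1 |c|^{2p+1}$, and let $T(\eta)$ denote the right-hand side of \eqref{eq:eta_int}. For $\eta \in B_R$ and $|c|$ small, the nonlinear source is bounded by $C|c|^{2p+1}|\Upsilon_h(t')|^{2p+1}$, since $|\eta(t')| \ll |c||\Upsilon_h(t')|$ pointwise inside the ball. Combining this with the kernel bound
$$
|\Upsilon_h(t')\Upsilon_g(t) - \Upsilon_h(t)\Upsilon_g(t')| \leq C\, w(t) \cdot e^{-(1+\frac{\lambda}{2})t'} e^{\frac{1}{2}e^{2t'}}, \qquad t' \geq t \geq 0,
$$
in which the $\Upsilon_h(t)\Upsilon_g(t')$ term dominates for $t' \geq t$ thanks to the super-exponential factor $e^{\frac{1}{2}e^{2t'}}$, and then inserting the decay $|\Upsilon_h(t')|^{2p+1} \sim e^{-(2p+1)\frac{1}{2}e^{2t'}}$, the source reduces to an integral of the form $\int_t^\infty e^{-(2p+2-\lambda p)t'} e^{-p\, e^{2t'}} dt'$, which is uniformly bounded for $t \geq 0$ by an incomplete Gamma-type estimate via $s = e^{2t'}$. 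This gives $|T(\eta)(t)| \leq C_2 |c|^{2p+1} w(t)$, so that fixing $C_1 \geq C_2$ ensures $T(B_R) \subset B_R$.

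The contraction estimate proceeds in parallel. Using the standard inequality $\bigl||u_1|^{2p} u_1 - |u_2|^{2p} u_2 \bigr| \leq C(|u_1|+|u_2|)^{2p}|u_1 - u_2|$ applied to $u_i = c\Upsilon_h + \eta_i$ in $B_R$, the difference of the source terms is bounded by $C|c|^{2p}|\Upsilon_h(t')|^{2p}|\eta_1(t') - \eta_2(t')|$, and re-running the same kernel integral gives $\|T(\eta_1) - T(\eta_2)\|_* \leq C|c|^{2p}\|\eta_1 - \eta_2\|_*$, where $\|\cdot\|_*$ is the norm associated to the weight $w$. This is a strict contraction for $|c|$ sufficiently small, and Banach's fixed-point theorem then yields a unique $\eta \in B_R$, which is precisely the bound \eqref{eq:Psi_c_est}; the differentiated version follows by differentiating the integral equation in $t$ and rerunning the same estimates. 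The main obstacle I anticipate is the kernel analysis on $\{t' \geq t \geq 0\}$: the factor $\Upsilon_g(t')$ explodes like $e^{\frac{1}{2}e^{2t'}}$ as $t' \to \infty$, and the whole argument rests on verifying that this growth is cancelled with room to spare by the $(2p+1)$-fold super-exponential decay of $|c\Upsilon_h(t')|^{2p+1}$, with constants uniform across the full half-line rather than only in an asymptotic regime.
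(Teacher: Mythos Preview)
Your approach is correct and essentially identical to the paper's: the paper makes the change of variables $\tilde{\eta}(t) = \eta(t)/w(t)$ with your weight $w(t) = e^{-(1-\lambda/2)t}e^{-\frac{1}{2}e^{2t}}$ and runs the contraction in $L^\infty(0,\infty)$, which is the same as your weighted-norm setup. One refinement worth noting: the condition $\lambda\leq 2$ is not what guarantees finiteness of the final source integral $\int_t^\infty e^{-(2p+2-p\lambda)t'}e^{-pe^{2t'}}dt'$ (the factor $e^{-pe^{2t'}}$ handles that for any $\lambda$), but is precisely what validates your kernel claim that ``the $\Upsilon_h(t)\Upsilon_g(t')$ term dominates for $t'\geq t$'', since the ratio of the two kernel terms equals $\exp\bigl(\lambda(t'-t)-(e^{2t'}-e^{2t})\bigr)$ and the paper verifies via the auxiliary function $\phi(t,t')=\lambda(t'-t)-e^{2t'}(1-e^{-2(t'-t)})$ that this stays $\leq 1$ on $\{t'\geq t\geq 0\}$ exactly when $\lambda\leq 2$.
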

	
	\begin{proof}
		In order to obtain a bounded kernel in the integral equation \eqref{eq:eta_int}, we first introduce the change of variables
		\[
		\tilde{\Upsilon}_h(t):= e^{(1-\frac{\lambda}{2})t}e^{\frac{1}{2}e^{2t}} \Upsilon_h(t), \qquad 		\tilde{\eta}(t):= e^{(1-\frac{\lambda}{2})t}e^{\frac{1}{2}e^{2t}}\eta(t),
		\]
		which applied to \eqref{eq:eta_int} results in the integral equation $\tilde{\eta}=E\tilde{\eta}$, where
		\begin{equation}
			\label{eq:eta_tild_int}
			(E\tilde{\eta})(t):=\int_t^{\infty}\hat{K}(t,t') e^{-p(2-\lambda)t' - p e^{2t'}-2t'}\hat{N}(		c\tilde{\Upsilon}_h(t'),\tilde{\eta}(t')) dt',
		\end{equation}
		and where the kernel $\hat{K}$ and the nonlinearity $\hat{N}$ are given by
		\[
		\hat{K}(t,t') := e^{-(1-\frac{\lambda}{2})(t'-t)+2t'+\frac{1}{2}(e^{2t}-e^{2t'})}(\Upsilon_h(t')\Upsilon_g(t)-\Upsilon_h(t)\Upsilon_g(t')),
		\]
		and
		\[
		\hat{N}(c \tilde{\Upsilon}_h,\tilde{\eta}) := |c \tilde{\Upsilon}_h +\tilde{\eta}|^{2p} (c \tilde{\Upsilon}_h+\tilde{\eta}).
		\]
		Using asymptotic behaviours \eqref{eq:Ups_h_lrg_t} and \eqref{eq:Ups_g_lrg_t}, we get that there exists $C > 0$ 
		such that 
\begin{equation}
\label{K-hat}
		|\hat{K}(t,t')| \leq C (1 + e^{\phi(t,t')}), \qquad 0 \leq t \leq t'.
\end{equation}
where $\phi(t,t') := \lambda(t'-t) -e^{2t'} (1-e^{-2(t'-t)})$. Since 
$\phi(t,t') \to -\infty$ as $t' \to +\infty$ and $\phi(t,t')$  has an extremum in $t'$ at $\lambda - 2 e^{2t'} = 0$, which does not belong to $\mathbb{R}$ if $\lambda \in (-\infty,0]$ and is located on $\mathbb{R}_-$ if $\lambda \in (0,2)$, we conclude that $\max\limits_{t' \in [t,\infty)} e^{\phi(t,t')} = e^{\phi(t,t)} = 1$ for $t \geq 0$. Hence, the kernel $\hat{K}(t,t')$ is bounded for every $0 \leq t \leq t' < \infty$. On the other hand, since $\hat{N}(c\Upsilon_h,\tilde{\eta})$ is a $C^1$ function for every $p>0$, the nonlinear term satisfies the following bound:
		\begin{equation}
			\label{eq:N_hat}
|\hat{N}(c	\tilde{\Upsilon}_h,\tilde{\eta})| \leq C |c|^{2p} (|c|  + |\tilde{\eta}|), \quad \mbox{\rm as long as } \; |\tilde{\eta}| \leq C,
		\end{equation}
where the constant $C > 0$ is independent of $c$. 
		
In order to use the Banach fixed-point theorem, we first estimate the size of $E\tilde{\eta}$ for $\tilde{\eta}$ in a small closed ball in $L^\infty(0,\infty)$. Since $e^{-p(2-\lambda)t - p e^{2t}-2t}$ in \eqref{eq:eta_tild_int} is absolutely integrable on $[0,\infty)$, 
we obtain by using \eqref{K-hat} and \eqref{eq:N_hat} that 
		\[
		\|E\tilde{\eta}\|_\infty \leq C |c|^{2p} (|c| + \|\tilde{\eta}\|_\infty),
		\]
		where the constant $C > 0$ is independent of $c$. Thus, the operator $E$ maps a closed ball of radius $2 C |c|^{2p+1}$ into itself  as long as $|c|$ is chosen sufficiently small.
		
		Similarly, by Proposition \ref{lemma:nonlin}, we get 
		for every $\tilde{\eta}_1$ and $\tilde{\eta}_2$ in the same small closed ball 
		in $L^{\infty}(0,\infty)$ that $\hat{N}$ is a Lipschitz function satisfying
\begin{equation}
\label{hat-N-Lipschitz}
		\|\hat{N}(c \tilde{\Upsilon}_h,\tilde{\eta}_1) -\hat{N}(c \tilde{\Upsilon}_h,\tilde{\eta}_2)\|_\infty \leq C |c|^{2p} \|\tilde{\eta}_1-\tilde{\eta}_2\|_\infty,
\end{equation}
		which yields
		\[
		\|E\tilde{\eta}_1-E\tilde{\eta}_2\|_\infty \leq C |c|^{2p} \|\tilde{\eta}_1-\tilde{\eta}_2\|_\infty,
		\]
		so that the operator $E$ is a contraction for sufficiently small values of $|c|$. By the Banach fixed-point theorem, there exists a unique solution $\tilde{\eta}(t)\in L^\infty(0,\infty)$ of the integral equation $\tilde{\eta}=E\tilde{\eta}$ satisfying $\| \tilde{\eta} \|_{\infty} \leq 2 C |c|^{2p+1}$. This estimate yields \eqref{eq:Psi_c_est} after unfolding the transformation for $\eta(t)$.
	\end{proof}

	Using the result of Lemma \ref{lemma:c_sol_est}, we can now extend the estimates for $\eta(t)$ for large negative values of $t$.
	
	\begin{lemma}
		Fix $\lambda\in(-\infty,2]$, $p>0$, and $a\in(0,1)$. Then, there exist $b_0>0$ and $C>0$, such that for every $b\geq b_0$ there exists $c_0>0$, such that for $t \in [-(1-a)p \log b,0]$ and 
$c \in (-c_0 b^{-(1-a)},c_0b^{-(1-a)})$, we have
		\begin{equation}
			\label{eq:Psi_c_est_neg_t}
|\Psi_c(t)-c\Upsilon_h(t)| \leq C |c|^{2p+1} b^{2p(1-a)} e^{-\frac{t}{p}},
		\end{equation}
where the bound can be differentiated term by term. 
		\label{lemma:c_est_long}
	\end{lemma}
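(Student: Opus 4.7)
The plan is to extend the fixed-point analysis of Lemma~\ref{lemma:c_sol_est} onto the enlarged interval $[-(1-a)p\log b,\infty)$ using the integral equation \eqref{eq:eta_int}. Since Lemma~\ref{lemma:c_sol_est} already controls $\eta$ on $[0,\infty)$, I treat that piece as known data and solve the integral equation for $\eta$ on $I_b := [-(1-a)p\log b, 0]$, writing
\[
\eta(t) = g(t) + \int_t^0 K(t,t')\,|c\Upsilon_h(t')+\eta(t')|^{2p}\bigl(c\Upsilon_h(t')+\eta(t')\bigr)\,dt',
\]
where $K(t,t') = \Upsilon_h(t')\Upsilon_g(t)-\Upsilon_h(t)\Upsilon_g(t')$ and $g(t)$ is the $\int_0^\infty$ piece from Lemma~\ref{lemma:c_sol_est}. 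The two pieces then glue by uniqueness of the decaying solution.

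The first ingredient is the left-end asymptotics. With $\beta=1+1/p>1$, the representation \eqref{scatering-1_simple} (or \eqref{scatering-2} when $\beta\in\mathbb{N}$) gives $\mathfrak{U}(z;\alpha,\beta)\sim \frac{\Gamma(\beta-1)}{\Gamma(\alpha)}z^{1-\beta}$ as $z\to 0^+$, so $\Upsilon_h(t) = C_h e^{-t/p}[1+o(1)]$ as $t\to-\infty$; integrating the Wronskian \eqref{eq:Ups_wronsk} then yields $\Upsilon_g(t) = C_g e^{t/p}[1+o(1)]$. Consequently $e^{t/p}\Upsilon_h$ and $e^{-t/p}\Upsilon_g$ are uniformly bounded on $(-\infty,0]$, and so is the product $\Upsilon_h\Upsilon_g$. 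I rescale $\tilde\eta(t):=e^{t/p}\eta(t)$ and $\tilde\Upsilon_h(t):=e^{t/p}\Upsilon_h(t)$, so the target estimate becomes $\|\tilde\eta\|_{L^\infty(I_b)}\leq C|c|^{2p+1}b^{2p(1-a)}$; the integral equation rewrites as a fixed-point equation $\tilde\eta=F\tilde\eta$ on an $L^\infty(I_b)$-ball. The hypothesis $|c|\leq c_0 b^{-(1-a)}$ is imposed precisely so that $|c\tilde\Upsilon_h(t)|\leq Cc_0$ uniformly on $I_b$, keeping the rescaled nonlinearity $C^1$-controlled with Lipschitz constant $O(c_0^{2p})$ by a computation analogous to \eqref{hat-N-Lipschitz}.

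The quantitative core is to estimate $F(0)$ on $I_b$. Splitting $K$ into its two summands and inserting the asymptotics of $\Upsilon_h, \Upsilon_g$, each summand reduces to an elementary exponential integral in $t'\in[t,0]$; evaluated at $t=-(1-a)p\log b$, both produce the size $C|c|^{2p+1}b^{(2p+1)(1-a)}$, which matches the claim $C|c|^{2p+1}b^{2p(1-a)}e^{-t/p}|_{t=-(1-a)p\log b}$ after the $e^{-t/p}$ rescaling is undone. The $g(t)$ piece is of lower order as a consequence of Lemma~\ref{lemma:c_sol_est}. Combining with the contraction constant $O(c_0^{2p})$, Banach's fixed-point theorem yields the unique $\tilde\eta$ in the ball; unfolding $\eta=e^{-t/p}\tilde\eta$ gives \eqref{eq:Psi_c_est_neg_t}. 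The principal obstacle is the precise bookkeeping of exponential factors in the kernel integral: the crude bound $|K(t,t')|\leq Ce^{(t'-t)/p}$ overshoots by a factor $b^{2(1-p)(1-a)}$ when $p<1$, so one must exploit that the product $\Upsilon_h\Upsilon_g$ is itself bounded as $t\to-\infty$, and organise the integral so that factors of $\Upsilon_h(t')^{2p+1}$ combine with $\Upsilon_g(t)$ or $\Upsilon_g(t')$ to produce integrands of the correct size, yielding the sharp power $b^{2p(1-a)}$.
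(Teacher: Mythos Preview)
Your proposal is correct and follows essentially the same route as the paper: rescale by $e^{t/p}$, use the $t\to-\infty$ asymptotics $\Upsilon_h\sim C_h e^{-t/p}$, $\Upsilon_g\sim C_g e^{t/p}$, and close a Banach fixed-point argument on $I_b$ with source of size $C|c|^{2p+1}b^{2p(1-a)}$ and Lipschitz constant $O(c_0^{2p})$. The only cosmetic difference is that the paper carries the $[0,\infty)$ information as boundary data $\eta(0),\eta'(0)$ (bounded by $C|c|^{2p+1}$ from Lemma~\ref{lemma:c_sol_est}) rather than as your integral tail $g(t)$; since $g$ restricted to $t\le 0$ solves $\mathcal{M}g=0$ with exactly those initial values, the two formulations are equivalent and yield the same lower-order contribution.

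One remark on your last paragraph: the worry about the ``crude bound'' overshooting by $b^{2(1-p)(1-a)}$ is not quite right. After your rescaling the kernel becomes $\tilde K(t,t')=e^{(t-t')/p}K(t,t')$, and with the asymptotics of $\Upsilon_h,\Upsilon_g$ one has $|\tilde K(t,t')|\le C(1+e^{-2(t'-t)/p})\le 2C$ for $t\le t'\le 0$; the factor $b^{2p(1-a)}$ then comes \emph{entirely} from $\int_t^0 e^{-2t'}dt'$, with no further bookkeeping needed. So the ``obstacle'' you flag does not actually materialise, but your proposed resolution (exploiting the boundedness of $\Upsilon_h\Upsilon_g$) is in any case equivalent to this rescaling and leads to the same sharp bound.
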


\begin{proof}
		We rewrite equation (\ref{eq:eta}) as an integral equation for $t\in[-(1-a)p\log b,0]$:
		\begin{multline}
			\label{eq:eta_int2}
			\eta(t) = \left(\Upsilon_g'(0)\Upsilon_h(t)-\Upsilon_h'(0)\Upsilon_g(t)\right)\eta(0) + \left(\Upsilon_h(0)\Upsilon_g(t)-\Upsilon_g(0)\Upsilon_h(t)\right)\eta'(0) \\ +  \int_t^0\left(\Upsilon_h(t')\Upsilon_g(t)-\Upsilon_h(t)\Upsilon_g(t')\right)\left(c\Upsilon_h(t')+\eta(t')\right)\left|c\Upsilon_h(t')+\eta(t')\right|^{2p}dt',
		\end{multline}
where $|\eta(0)| + |\eta'(0)| \leq C |c|^{2p+1}$ by Lemma \ref{lemma:c_sol_est}.
By using the scattering relation \eqref{scatering-1_simple}  and the transformation \eqref{eq:Ups_h}, we obtain the following asymptotic behavior for $\Upsilon_h(t)$:
		\begin{equation}
		\label{eq:Ups_h_neg_t}
		\Upsilon_h(t) \sim  \frac{\Gamma\left(\frac{1}{p}\right)}{\Gamma(\alpha)}e^{-\frac{t}{p}}, \quad \text{ as } t\to-\infty,
		\end{equation}
where $\alpha>0$ by \eqref{eq:alph}. Wronskian relation \eqref{eq:Ups_wronsk} implies the following asymptotic behavior for  $\Upsilon_g(t)$:
\begin{equation}
\label{eq:Ups_g_neg_t}
		\Upsilon_g(t)\sim\frac{p\Gamma(\alpha)}{2\Gamma\left(\frac{1}{p}\right)}e^{\frac{t}{p}}, \quad \text{ as } t\to-\infty.
		\end{equation}
The divergent behaviour of $\Upsilon_h(t)$ as $t \to -\infty$ dictates the correct form of the transformation to use, which in this case is given by:
		\[
\tilde{\Upsilon}_h(t):=e^{\frac{t}{p}}\Upsilon_h(t), \quad 	
\tilde{\Upsilon}_g(t):=e^{\frac{t}{p}}\Upsilon_g(t), \quad 		\tilde{\eta}(t):=e^{\frac{t}{p}}\eta(t).
		\]
Applying it to the integral equation \eqref{eq:eta_int2} results in the fixed point equation $\tilde{\eta}=\mathcal{F}\tilde{\eta}$, where
\begin{multline}
\label{eq:eta_tild_int2}
(\mathcal{F}\tilde{\eta})(t) := \left(\Upsilon_g'(0)\tilde{\Upsilon}_h(t) - \Upsilon_h'(0)\tilde{\Upsilon}_g(t)\right)\eta(0) + \left(\tilde{\Upsilon}_h(0)\Upsilon_g(t)-\Upsilon_g(0)\tilde{\Upsilon}_h(t)\right)\eta'(0) \\ +  \int_t^0 e^{-2t'} \tilde{K}(t,t') \hat{N}(c\tilde{\Upsilon}_h(t'),\tilde{\eta}(t'))dt',
\end{multline}
where 
$$
\tilde{K}(t,t') := e^{\frac{t-t'}{p}} \left(\Upsilon_h(t')\Upsilon_g(t)-\Upsilon_h(t)\Upsilon_g(t')\right)
$$
and $\hat{N}$ is the same as in the proof of Lemma \ref{lemma:c_sol_est}.
		
We proceed by estimating each term of $\mathcal{F}\tilde{\eta}$ in the space $L^\infty(-(1-a)p\log b,0)$. Since $\tilde{\Upsilon}_h$ and $\tilde{\Upsilon}_g$ are bounded for $t\in(-\infty,0]$ and 
		$|\eta(0)| + |\eta'(0)| \leq C |c|^{2p+1}$, we obtain
		\begin{equation*}
		\left|\left(\Upsilon_g'(0)\tilde{\Upsilon}_h(t)-\Upsilon_h'(0)\tilde{\Upsilon}_g(t)\right)\eta(0) + \left(\tilde{\Upsilon}_h(0)\Upsilon_g(t)-\Upsilon_g(0)\tilde{\Upsilon}_h(t)\right)\eta'(0)\right| \leq C|c|^{2p+1}.
		\end{equation*}
Furthermore, if $t \ll -1$, asymptotics \eqref{eq:Ups_h_neg_t} and \eqref{eq:Ups_g_neg_t} allow us to estimate size of the last term in \eqref{eq:eta_tild_int2} as
		\begin{align*}
		\left| \int_t^0 e^{-2t'} \tilde{K}(t,t') \hat{N}(		c\tilde{\Upsilon}_h(t'),\tilde{\eta}(t'))dt' \right| &\leq C |c|^{2p} \int_{t}^0 e^{-2t'} (1 + e^{-\frac{2}{p}(t'-t)}) (|c| + |\tilde{\eta}(t')|) dt' \\
		&\leq C |c|^{2p} b^{2p(1-a)} (|c| + \| \tilde{\eta} \|_\infty),
		\end{align*}
where we have used the $C^1$ property of $\hat{N}(c\tilde{\Upsilon}_h,\tilde{\eta})$ satisfying (\ref{eq:N_hat}).
These two estimates yield
		\[
		\|\mathcal{F}\tilde{\eta}\|_\infty \leq C |c|^{2p}b^{2p(1-a)}\left(|c|+\|\tilde{\eta}\|_\infty\right),
		\]
		for sufficiently large values of $b$. The divergent behaviour of $b^{2p(1-a)}$ for large $b$ is controlled by appropriately reducing the value of $|c|$ satisfying $|c| < c_0 b^{-(1-a)}$ for sufficiently small $c_0 > 0$. Thus, we see that the operator $\mathcal{F}$ maps the closed ball of the radius $2C|c|^{2p+1}b^{2p(1-a)}$ in $L^\infty(-(1-a)p\log b,0)$ into itself. Moreover, since $\hat{N}$ is a Lipschitz function satisfying (\ref{hat-N-Lipschitz}), we get that if $\tilde{\eta}_1,\tilde{\eta}_2$ belong to the same ball, then
		\[
			\|\mathcal{F}\eta_1-\mathcal{F}\eta_2\|_\infty \leq C|c|^{2p}b^{2p(1-a)}\|\tilde{\eta}_1-\tilde{\eta}_2\|_\infty,
		\]
		so that the operator $\mathcal{F}$ is a contraction as long as $|c|< c_0 b^{-(1-a)}$ for sufficiently small $c_0 > 0$. By the Banach fixed-point theorem, there exists a unique $\tilde{\eta}\in L^\infty(-(1-a)p\log b,0)$ satisfying
		\[
		\sup_{t\in[-(1-a)p\log b,0]}|\tilde{\eta}(t)|\leq C|c|^{2p+1}b^{2p(1-a)}, 
		\]
which yields the bound \eqref{eq:Psi_c_est_neg_t} due to the transformation $\eta(t)=e^{-\frac{t}{p}}\tilde{\eta}(t)$.
	\end{proof}
	
	\section{Matching solutions: the proof of Theorem \ref{theorem-main}}
	\label{sec-5}

We are now equipped with all the necessary estimates to prove Theorem \ref{theorem-main}. The ground state $u = u_b$ of the stationary Gross--Pitaevskii equation (\ref{GP}) in the Emden-Fowler variables (\ref{eq:EF_transf}) exhibits decaying behaviour both as $t\to-\infty$ and as $t\to+\infty$ for every $b > 0$ if $\lambda = \lambda(b)$. In other words, it appears at the intersection of the two solution families with 
\begin{equation}
\label{intersection}
\Psi_b(t)=\Psi_{c(b)}(t), \qquad t\in\mathbb{R}
\end{equation}
for some $\lambda = \lambda(b)$ and $c = c(b)$. This allows us to use the asymptotic behaviours \eqref{eq:gamm2_est_c1} and \eqref{eq:gamm2_est_c2} for $\Psi_b(t)$, and the asymptotic behavior \eqref{eq:Psi_c_est_neg_t} for $\Psi_c(t)$ at the times $t = T-(1-a)p\log b$ with varying $T > 0$ and sufficiently large values of $b$. Equaling the asymptotic behaviors due to (\ref{intersection}) yields two implicit equations for parameters $\lambda$ and $c$.

Bound (\ref{asymptotics-u-b}) follows from the bound (\ref{eq:gamma_bound}) with $T = 0$ after the transformation (\ref{eq:EF_transf}). Bounds 
(\ref{asymptotics-u-c}) and (\ref{asymptotics-u-d}) follow 
from the bounds (\ref{eq:Psi_c_est}) and (\ref{eq:Psi_c_est_neg_t}) 
in the reversed order after the transformation (\ref{eq:EF_transf}). The proof of Theorem \ref{theorem-main} is completed after obtaining the asymptotic representation for $\lambda(b)$ and $c(b)$ for large $b$.
	
We fix $p\in(0,1)$, $T>0$, and $a\in(0,\frac{p}{1+p})$. For sufficiently large 
$b \geq b_{T,a}$, we consider $(\lambda,c)$ in the rectangle
$[0,2] \times [0,c_0 b^{-(1-a)}]$ for which both Lemmas \ref{lem-aux-new} and \ref{lemma:c_est_long} can be applied.

By Lemma \ref{lem-aux-new}, we have for $t\in [ap \log b,T+a p\log b]$,
	\begin{equation}\label{eq:Psi_b_exp}
		\Psi_b(t-p \log b) = \Theta_h(t) + \gamma_h(t) + \mathcal{O}(F(\lambda,b) b^{2ap} e^{\frac{t}{p}}), 
	\end{equation}
where 
\begin{equation*}
F(\lambda,b) := \left\{ \begin{array}{ll} (|\lambda| b^{-2p} + b^{-4p})^{2p+1}, & \quad  p\in(0,\frac{1}{2}), \\
(|\lambda| b^{-2p} + b^{-4p(1-a)})^2, & \quad p\in[\frac{1}{2},1),
\end{array} \right.
\end{equation*}
and the asymptotic expansion can be differentiated in $t$.
Using \eqref{eq:gamma1_def}, we can write \eqref{eq:Psi_b_exp}  as
\begin{align*}
		\Psi_b(t-p\log b) =& \Theta_h(t) + \Sigma(t)\int_{-\infty}^{t} f_b(t') \Theta_h'(t')\Theta(t')dt'\\ 
		& - \Theta_h'(t)\int_{-\infty}^{t} f_b(t')\Sigma(t')\Theta_h(t')dt' + \mathcal{O}(F(\lambda,b) b^{2ap} e^{\frac{t}{p}}).
\end{align*}
Evaluating these expressions at $t=T+ ap\log b$ and using the asymptotic relations \eqref{eq:Theta_asympt} and \eqref{eq:Sigma_asympt} for $\Theta_h(t)$ and $\Sigma(t)$ as $t \to +\infty$, we obtain
\begin{align*}
		\Psi_b(T-(1-a)p\log b) =& \alpha_p^{-1/p} e^{-\frac{T}{p}} b^{-a} [ 1+\mathcal{O}(b^{-2ap})] \\
		& -\frac{1}{2} p^2\alpha_p^{1/p}e^{\frac{T}{p}} b^a [1+\mathcal{O}(b^{-2ap})] \int_{-\infty}^{T+ap\log b} f_b(t) \Theta_h'(t) \Theta_h(t) dt \\ 
		&+ \frac{1}{p} \alpha_p^{-1/p} e^{-\frac{T}{p}} b^{-a} [1+\mathcal{O}(b^{-2ap})] \int_{-\infty}^{T+ap\log b} f_b(t) \Sigma(t) \Theta_h(t) dt \\
		& + \mathcal{O}(F(\lambda,b) b^{2ap+a} e^{\frac{T}{p}}),
\end{align*}
where $f_b(t) =  -\lambda b^{-2p} e^{2t} + b^{-4p}  e^{4t}$. 
Since 
\begin{align*}
\left| \int_{-\infty}^{T+ap\log b}f_b(t)\Sigma(t)\Theta_h(t)dt \right| \leq C_{T,a}  b^{-2p(1-a)}
\end{align*}
is obtained in the proof of Lemma \ref{lem-aux}, we finally obtain 
the asymptotic formula:
	\begin{align}
		& \Psi_b(T-(1-a)p\log b) = \alpha_p^{-1/p} e^{-\frac{T}{p}} b^{-a} [1 + \mathcal{O}(b^{-2ap}, b^{-2p(1-a)})] \notag \\ & 
		\qquad-\frac{1}{2} p^2\alpha_p^{1/p}e^{\frac{T}{p}} b^a \left[ 
		\int_{-\infty}^{T+ap\log b} f_b(t) \Theta_h'(t) \Theta_h(t) dt [1 + \mathcal{O}(b^{-2ap})] + \mathcal{O}(F(\lambda,b) b^{2ap}) \right].
		\label{eq:Psi_b_match}
	\end{align}

By Lemma \ref{lemma:c_est_long}, we have  for $t\in(-(1-a) p\log b,0]$,	
\begin{equation}
\label{eq:Psi_c_est_prel} 
\Psi_c(t) = c \Upsilon_h(t) + \mathcal{O}(|c|^{2p+1} b^{2p (1-a)} e^{-\frac{t}{p}}),
\end{equation}	
where $\Upsilon_h$ is given by (\ref{eq:Ups_h}) and the asymptotic expansion can be differentiated in $t$. Since the expansion (\ref{eq:Psi_c_est_prel}) 
is used for $t \to -\infty$, we can use either (\ref{scatering-1_simple}) 
or (\ref{scatering-2}) for asymptotic expansions of the Tricomi 
function $\mathfrak{U}(e^{2t};\alpha,\beta)$ in (\ref{eq:Ups_h}), where $\alpha = \frac{p+1}{2p} - \frac{\lambda}{4} > 0$ due to (\ref{eq:alph}) and $\beta = 1 + \frac{1}{p}$. 
If $p \neq \frac{1}{n}$ with $n \in \mathbb{N}$, then 
the asymptotic formula for the solution $\Psi_c$ evaluated at $t =T-(1-a)p\log b$ is obtained with the help of \eqref{scatering-1_simple}, \eqref{eq:Ups_h}, and \eqref{eq:Psi_c_est_prel} in the form:
\begin{align}
\Psi_c(T-(1-a)p\log b) =& c e^{\frac{T}{p}} \frac{\Gamma\left(-\frac{1}{p}\right)}{\Gamma\left(\frac{p-1}{2p}-\frac{\lambda}{4}\right)}b^{-(1-a)} [1+\mathcal{O}(b^{-2p(1-a)})] \notag \\ &+ c e^{-\frac{T}{p}}\frac{\Gamma\left(\frac{1}{p}\right)}{\Gamma\left(\frac{p+1}{2p}-\frac{\lambda}{4}\right)}b^{1-a} [ 1+\mathcal{O}(b^{-2p(1-a)},|c|^{2p} b^{2p(1-a)})]. \label{eq:Psi_c_match}
\end{align}
If $p = \frac{1}{n}$ with $n \in \mathbb{N}$, then the asymptotic formula for the solution $\Psi_c$ evaluated at $t =T-(1-a)p\log b$ is obtained with the help of \eqref{scatering-2}, \eqref{eq:Ups_h}, and \eqref{eq:Psi_c_est_prel} in the form:
\begin{align}
\Psi_c(T-(1-a)p\log b) =& c e^{nT}  \frac{2 (-1)^{n+1}}{n!\Gamma\left(\frac{1-n}{2}-\frac{\lambda}{4}\right)} 
b^{-(1-a)} [ (T - (1-a) p \log b) [1 + \mathcal{O}(b^{-2p(1-a)})] + \mathcal{O}(1) ] \notag \\ 
& + c e^{-nT} \frac{(n-1)!}{\Gamma(\alpha)} b^{1-a} [1 + 
\mathcal{O}(b^{-2p(1-a)},|c|^{2p} b^{2p(1-a)})], \label{eq:Psi_c_t_c1}
\end{align}
where $\alpha = \frac{1+n}{2} - \frac{\lambda}{4} > 0$.\\

When we use the connection equation (\ref{intersection}), it sets up the system of two equations for two unknowns $\lambda$ and $c$. These two equations can be obtained by equaling $\Psi_b(t)$ and $\Psi_c(t)$ as well as their first derivatives at the time $t = T - (1-a)p \log b$. Alternatively, since the asymptotic expansions are differentiable in $t$ term by term, we can set up the system by equaling coefficients in front of the exponential functions 
$e^{\frac{T}{p}}$ and $e^{-\frac{T}{p}}$. Equaling the coefficients for 
the $e^{-\frac{T}{p}}$ terms in (\ref{eq:Psi_b_match}) with either 
(\ref{eq:Psi_c_match}) or (\ref{eq:Psi_c_t_c1}) yields the following equation:
\begin{equation}
\label{eq-1}
\alpha_p^{-1/p} b^{-a} [1 + \mathcal{O}(b^{-2ap}, b^{-2p(1-a)})] =  c \frac{\Gamma\left(\frac{1}{p}\right)}{\Gamma(\alpha)}  b^{1-a} [ 1+\mathcal{O}(b^{-2p(1-a)},|c|^{2p} b^{2p(1-a)})].
\end{equation}
The nonlinear equation (\ref{eq-1}) is defined for $(\lambda,c) \in [0,2] \times [0,c_0 b^{-(1-a)}]$ and the remainder terms are $C^1$ functions 
with respect to $(\lambda,c)$. Since the leading-order part of the nonlinear equation (\ref{eq-1}) is linear in $c$ and suggests 
the solution $c = \mathcal{O}(b^{-1})$, which clearly exists inside 
$|c| \leq c_0 b^{-(1-a)}$, we have by an application of the implicit function 
theorem the existence of a $C^1$ function $c = c(\lambda,b)$ for 
$\lambda \in [0,2]$ and sufficiently large $b \geq b_{T,a}$, which is given asymptotically as 
\begin{equation}
\label{c-solution}
c(\lambda,b)  = \alpha_p^{-1/p} \frac{\Gamma(\alpha)}{\Gamma\left(\frac{1}{p}\right)} b^{-1} [ 1 + \mathcal{O}(b^{-2ap},b^{-2p(1-a)}) ].
\end{equation}
Equaling the coefficients for 
the $e^{\frac{T}{p}}$ terms in (\ref{eq:Psi_b_match}) with either 
(\ref{eq:Psi_c_match}) or (\ref{eq:Psi_c_t_c1}) and substituting 
the expression (\ref{c-solution}) for $c$ yields a nonlinear equation 
for $\lambda$, which we can also solve with an application 
of the implicit function theorem. However, details of computations
depend on the value of $p \in (0,1)$ and hence are reported  
separately for different values of $p$. \\

{\bf Case $p \in (0,\frac{1}{2})$}. If $p \neq \frac{1}{n}$ for every $n \in \mathbb{N}$, we use (\ref{eq:Psi_b_match}) and 
(\ref{eq:Psi_c_match}) in (\ref{intersection}), equal the coefficients for the 
$e^{\frac{T}{p}}$ terms, and substitute
the expression (\ref{c-solution}) for $c = c(\lambda,b)$. This yields the nonlinear equation for $\lambda$:
\begin{align}
& \frac{1}{2} p^2 \alpha_p^{1/p} b^a \big[ 
\lambda b^{-2p} \int_{-\infty}^{T+ap\log b} e^{2t} \Theta_h'(t) \Theta_h(t) dt [1 + \mathcal{O}(b^{-2ap})] \notag \\
& -   b^{-4p} \int_{-\infty}^{T+ap\log b} e^{4t} \Theta_h'(t) \Theta_h(t) dt [1 + \mathcal{O}(b^{-2ap})] + 
\mathcal{O}((|\lambda| b^{-2p} + b^{-4p})^{2p+1} b^{2ap})  \big] \notag \\
 &= 	 \alpha_p^{-1/p} \frac{\Gamma\left(\frac{p+1}{2p}-\frac{\lambda}{4}\right)\Gamma\left(-\frac{1}{p}\right)}{\Gamma\left(\frac{p-1}{2p}-\frac{\lambda}{4}\right)\Gamma\left(\frac{1}{p}\right)} b^{-2+a} [ 1 + \mathcal{O}(b^{-2p(1-a)})]. \label{eq-2}
\end{align}
If $p \in (0,\frac{1}{2})$, both integrals in the left-hand-side of \eqref{eq-2} converge due to the asymptotic expansion 
(\ref{eq:Theta_asympt}) so that they can be expanded as 		
\begin{align*}
\int_{-\infty}^{T+ap\log b} e^{2t} \Theta_h'(t) \Theta_h(t) dt &= -\int_{-\infty}^{+\infty} e^{2t} \Theta_h(t)^2 dt + \mathcal{O}(b^{-2a(1-p)}),\\
\int_{-\infty}^{T+ap\log b} e^{4t} \Theta_h'(t) \Theta_h(t) dt &= -2\int_{-\infty}^{+\infty} e^{4t} \Theta_h(t)^2 dt + \mathcal{O}(b^{-2a(1-2p)}),
\end{align*}
which implies that the nonlinear equation (\ref{eq-2}) for $\lambda$ can be rewritten 
in the equivalent form:
\begin{align}
& \lambda \int_{-\infty}^{+\infty} e^{2t} \Theta_h(t)^2 dt [1 + \mathcal{O}(b^{-2ap},b^{-2a(1-p)})] \notag \\
& -   2 b^{-2p} \int_{-\infty}^{+\infty} e^{4t} \Theta_h(t)^2 dt [1 + \mathcal{O}(b^{-2ap},b^{-2a(1-2p)})] + 
\mathcal{O}((|\lambda| b^{-2p} + b^{-4p})^{2p+1} b^{2p(1+a)}) \notag \\
&= 	 -2 p^{-2} \alpha_p^{-2/p} \frac{\Gamma\left(\frac{p+1}{2p}-\frac{\lambda}{4}\right)\Gamma\left(-\frac{1}{p}\right)}{\Gamma\left(\frac{p-1}{2p}-\frac{\lambda}{4}\right)\Gamma\left(\frac{1}{p}\right)} b^{-2(1-p)} [ 1 + \mathcal{O}(b^{-2p(1-a)})]. \label{eq-2-new}
\end{align}
If $p \neq \frac{1}{n}$ for every $n \in \mathbb{N}$, then $\frac{p-1}{2p},-\frac{1}{p} \neq -m$ for every $m \in \mathbb{N}$ so that the arguments of the Gamma functions at $\lambda = 0$ are away from their pole singularities. Hence, all terms of the nonlinear equation (\ref{eq-2-new}) are $C^1$ functions of $\lambda$ at $\lambda = 0$. 
For $p \in (0,\frac{1}{2})$, $b^{-2(1-p)} \ll b^{-2p}$ for sufficiently large $b$. Since the leading-order part of the nonlinear equation (\ref{eq-2-new}) 
is linear in $\lambda$ and suggests the solution $\lambda = \mathcal{O}(b^{-2p})$, we have by an application of the implicit function 
theorem the existence of a $C^1$ function $\lambda = \lambda(b)$ for sufficiently large $b$ which is given asymptotically by 	
\begin{equation}
\label{eq:lamb_c1}
\lambda(b) = 2 \frac{\int_{-\infty}^{+\infty}e^{4t}\Theta_h(t)^2dt}{\int_{-\infty}^{+\infty}e^{2t}\Theta_h(t)^2dt}b^{-2p} + \mathcal{O}(b^{-2(1-p)},b^{-2p(1+a)},b^{-2(p+a(1-2p))},b^{-2p(4p+1-a)}).
\end{equation}
The integrals in (\ref{eq:lamb_c1}) can be computed by using 
the explicit expression for $\Theta_h$ given by 
\eqref{eq:Theta_alg_sol} with $t_0=0$. Using the substitution  $s=(1+\alpha_pe^{2t})^{-1}$ we express the integrals in terms of the 
Beta function 
$$
B(z_1,z_2) := \int_0^1 s^{z_1-1} (1-s)^{z_2-1} ds = 
\frac{\Gamma(z_1)\Gamma(z_2)}{\Gamma(z_1+z_2)}, \quad z_1,z_2>0
$$
and obtain		
\begin{align*}
\int_{-\infty}^{+\infty}e^{2t}\Theta_h(t)^2dt &= \frac{1}{2\alpha_p^{1+1/p}}\int_0^1s^{\frac{1}{p}-2}(1-s)^{\frac{1}{p}}ds = \frac{\Gamma\left(\frac{1}{p}-1\right)\Gamma\left(\frac{1}{p}+1\right)}{2\alpha_p^{1+1/p} \Gamma\left(\frac{2}{p}\right)}, \\
\int_{-\infty}^{+\infty}e^{4t}\Theta_h(t)^2dt &= \frac{1}{2\alpha_p^{2+1/p}}\int_0^1 s^{\frac{1}{p}-3}(1-s)^{\frac{1}{p}+1}ds = \frac{\Gamma\left(\frac{1}{p}-2\right)\Gamma\left(\frac{1}{p}+2\right)}{2\alpha_p^{2+1/p} \Gamma\left(\frac{2}{p}\right)}.
\end{align*}
Substituting these expressions into \eqref{eq:lamb_c1} yields 
the final formula for $p \in (0,\frac{1}{2})$ and $p \neq \frac{1}{n}$ for every $n \in \mathbb{N}$:
\begin{equation}
\label{eq:lamb_c2}
\lambda(b) = \frac{2 (1+p)}{\alpha_p (1-2p)} b^{-2p} + \mathcal{O}(b^{-2(1-p)},b^{-2p(1+a)},b^{-2(p+a(1-2p))},b^{-2p(4p+1-a)}),
\end{equation}
where we have used the property $\Gamma(z+1) = z \Gamma(z)$. \\

If $p = \frac{1}{n}$ for some $n \in \mathbb{N}\backslash \{1,2\}$,  we use (\ref{eq:Psi_b_match}) and 
(\ref{eq:Psi_c_t_c1}) in (\ref{intersection}), equal the coefficients for the 
$e^{\frac{T}{p}}$ terms, and substitute
the expression (\ref{c-solution}) for $c = c(\lambda,b)$. This yields the nonlinear equation for $\lambda$:
\begin{align}
& \frac{1}{2} p^2 \alpha_p^{1/p} b^a \big[ 
\lambda b^{-2p} \int_{-\infty}^{T+ap\log b} e^{2t} \Theta_h'(t) \Theta_h(t) dt [1 + \mathcal{O}(b^{-2ap})] \notag \\
& -   b^{-4p} \int_{-\infty}^{T+ap\log b} e^{4t} \Theta_h'(t) \Theta_h(t) dt [1 + \mathcal{O}(b^{-2ap})] + 
\mathcal{O}((|\lambda| b^{-2p} + b^{-4p})^{2p+1} b^{2ap}) \big] \notag \\
&= 	  \frac{2 (-1)^{n+1} \Gamma\left( \frac{1+n}{2} - \frac{\lambda}{4} \right)}{\alpha_p^n n! (n-1)!   \Gamma\left(\frac{1-n}{2}-\frac{\lambda}{4}\right)} b^{-2+a} [(T - (1-a) p \log b) [ 1 + \mathcal{O}(b^{-2p(1-a)})] + \mathcal{O}(1) ]. \label{eq-5}
\end{align}
After dividing it by $b^{a-2p}$, this equation can be rewritten in the form 
(\ref{eq-2-new}), where the right-hand side has the order of 
$$
\frac{\log b \; b^{-2(1-p)}}{\Gamma(\frac{1-n}{2} - \frac{\lambda}{4})},
$$ 
which is much smaller than the leading-order term of the order of $\mathcal{O}(b^{-2p})$ for $p \in (0,\frac{1}{2})$. For even $n$, the final formula (\ref{eq:lamb_c1}) for $\lambda(b)$ is modified as follows:
\begin{equation*}
\lambda(b) = \frac{2 (1+p)}{\alpha_p (1-2p)} b^{-2p} + \mathcal{O}(\log b b^{-2(1-p)},b^{-2p(1+a)},b^{-2(p+a(1-2p))},b^{-2p(4p+1-a)}).
\end{equation*}
For odd $n$, we also have $\Gamma(\frac{1-n}{2}) = \infty$. Since 
$$
\Gamma(z) = \frac{(-1)^n}{n! (z+n)} + \mathcal{O}(1) \quad \mbox{\rm as} \;\; z \to -n
$$
and $\lambda(b) = \mathcal{O}(b^{-2p})$, we have 
$$
\frac{1}{
\Gamma(\frac{1-n}{2} - \frac{\lambda}{4})} = \mathcal{O}(\lambda) = \mathcal{O}(b^{-2p}),
$$
which modifies the final formula (\ref{eq:lamb_c1}) for $\lambda(b)$ 
according to 
\begin{equation*}
\lambda(b) = \frac{2 (1+p)}{\alpha_p (1-2p)} b^{-2p} + \mathcal{O}(\log b b^{-2},b^{-2p(1+a)},b^{-2(p+a(1-2p))},b^{-2p(4p+1-a)}).
\end{equation*}
In both formulas for $\lambda = \lambda(b)$, we have $p = \frac{1}{n}$ with either even or odd $n \in \mathbb{N}\backslash \{1,2\}$. In all cases, $\lambda(b) > 0$ for suffiently large values of $b$.\\

{\bf Case $p \in (\frac{1}{2},1)$}. Since $p \neq \frac{1}{n}$ for every $n \in \mathbb{N}$ if $p \in (\frac{1}{2},1)$, the nonlinear equation (\ref{eq-2}) can be used. However, the integral $\int_{-\infty}^{T + ap \log b} e^{4t} \Theta_h(t)^2 dt$ diverges exponentially in the upper limit since $\Theta_h(t)^2 = \mathcal{O}(e^{-\frac{2t}{p}})$ as $t \to +\infty$. Consequently, there is a positive constant $C_{T,a}$ such that for all $b \geq b_{T,a}$, we have 
\begin{equation*}
\left| \int_{-\infty}^{T+ap\log b}e^{4t}\Theta_h'(t)\Theta_h(t)dt \right| \leq  C_{T,a} b^{2a(2p-1)}.
\end{equation*}
The nonlinear equation (\ref{eq-2}) can be rewritten in the equivalent form:
\begin{align}
& \lambda \int_{-\infty}^{+\infty} e^{2t} \Theta_h(t)^2 dt [1 + \mathcal{O}(b^{-2ap},b^{-2a(1-p)})] \notag \\
& + b^{-2p} \int_{-\infty}^{T+ap \log b} e^{4t} \Theta_h'(t) \Theta_h(t) dt [1 + \mathcal{O}(b^{-2ap})] + 
\mathcal{O}((|\lambda| b^{-2p} + b^{-4p(1-a)})^2 b^{2p(1+a)}) \notag \\
&= 	 -2 p^{-2} \alpha_p^{-2/p} \frac{\Gamma\left(\frac{p+1}{2p}-\frac{\lambda}{4}\right)\Gamma\left(-\frac{1}{p}\right)}{\Gamma\left(\frac{p-1}{2p}-\frac{\lambda}{4}\right)\Gamma\left(\frac{1}{p}\right)} b^{-2(1-p)} [ 1 + \mathcal{O}(b^{-2p(1-a)})], \label{eq-3}
\end{align}
where the second term on the left-hand side is of the order of 
$b^{-2p +2 a (2p-1)} \to 0$ as $b \to \infty$ since $-2p+2a(2p-1)<0$ for $a<\frac{p}{2p-1}$, which is satisfied automatically, since $\frac{p}{2p-1} > 1$ for $p \in (\frac{1}{2},1)$. Moreover, since $b^{-2p + 2a(2p-1)}\ll b^{-2(1-p)}$ for $p\in\left(\frac{1}{2},1\right)$ and $a \in (0,1)$, the right-hand side dominates in the nonlinear equation (\ref{eq-3}). Solving the nonlinear equation (\ref{eq-3}) by an application of the implicit function theorem, we have the existence of a $C^1$ function $\lambda = \lambda(b)$ for sufficiently large $b$ which is given asymptotically by 
\begin{align}
\notag
\lambda(b) =& -\frac{4\alpha_p^{1-1/p}\Gamma\left(\frac{p+1}{2p}\right)\Gamma\left(-\frac{1}{p}\right)\Gamma\left(\frac{2}{p}\right)}{p^2 \Gamma\left(\frac{p-1}{2p}\right)\Gamma\left(\frac{1}{p}\right)\Gamma\left(\frac{1}{p}-1\right) \Gamma\left(\frac{1}{p}+1\right)} b^{-2(1-p)} \\
& \qquad +  \mathcal{O}(b^{-2p+2a(2p-1)},b^{-2(1-p)-2ap},b^{-2(1+a)(1-p)},b^{-2(1-pa)},b^{-2p(3-5a)}).\label{eq:lamb_c3}
\end{align}
Since $-\frac{1}{p}\in(-2,-1)$ and $\frac{p-1}{2p}\in(-\frac{1}{2},0)$ if $p \in (\frac{1}{2},1)$, we have $\Gamma\left(-\frac{1}{p}\right)>0$ and  $\Gamma\left(\frac{p-1}{2p}\right)<0$. Hence, $\lambda(b) > 0$ for sufficiently large values of $b$.\\

{\bf Case $p  = \frac{1}{2}$}. This case corresponds to $n = 2$ in the nonlinear equation (\ref{eq-5}), which we can rewrite in the equivalent form:
\begin{align}
&  \lambda \int_{-\infty}^{+\infty} e^{2t} \Theta_h(t)^2 dt [1 + \mathcal{O}(b^{-a})] \notag \\
& +  b^{-1} \int_{-\infty}^{T+ap\log b} e^{4t} \Theta_h'(t) \Theta_h(t) dt [1 + \mathcal{O}(b^{-a})] + 
\mathcal{O}((|\lambda| b^{-1} + b^{-2} \log b)^{2} b^{a}) \notag \\
&= 	  \frac{8 \Gamma\left( \frac{3}{2} - \frac{\lambda}{4} \right)}{\alpha_p^4   \Gamma\left(-\frac{1}{2}-\frac{\lambda}{4}\right)} b^{-1} [(T - (1-a) p \log b) [ 1 + \mathcal{O}(b^{-(1-a)})] + \mathcal{O}(1) ], \label{eq-6}
\end{align}
where $\alpha_{p = \frac{1}{2}} = \frac{1}{4!}$.
The second integral $\int_{-\infty}^{T+ap\log b} e^{4t} \Theta_h'(t)\Theta_h(t) dt$ diverges linearly in the upper limit since $\Theta_h(t)^2 = \mathcal{O}(e^{-4t})$ as $t \to +\infty$. The exact computations 
with the help of the explicit formula (\ref{eq:Theta_alg_sol}) yield the 
following asymptotic expression:
\begin{align*}
\int_{-\infty}^{T+ap \log b}e^{4t}\Theta_h'(t)\Theta_h(t)dt & = \frac{1}{2} e^{4t} \Theta_h(t)^2 \biggr|^{t = T+ap \log b}_{t \to -\infty}  - 2\int_{-\infty}^{T+ap \log b}e^{4t}\Theta_h(t)^2 dt\\
&= \frac{1}{2 \alpha_p^4} - \frac{1}{\alpha_p^4} \left[ 
2(T + a p \log b) + \log(\alpha_p) - \frac{11}{6} + \mathcal{O}(b^{-2ap}) \right] \\
&= -\frac{2}{\alpha_p^4} (T + ap \log b) + \frac{7}{3 \alpha_p^4} - \frac{\log(\alpha_p)}{\alpha_p^4} + \mathcal{O}(b^{-2ap}).
\end{align*}
On the other hand, we use (\ref{Gamma-function}) and obtain
$$
\frac{8 \Gamma\left(\frac{3}{2}\right)}{\Gamma\left(-\frac{1}{2}\right)} = 
-\frac{8}{\pi}  \Gamma^2\left(\frac{3}{2}\right) = -2,
$$
so that the leading-order terms of the nonlinear equation (\ref{eq-6}) can be collected together as 
\begin{align*}
&  \lambda \int_{-\infty}^{+\infty} e^{2t} \Theta_h(t)^2 dt [1 + \mathcal{O}(b^{-a})] +  
\mathcal{O}(b^{-1},\log b \; b^{-1-a}) + 
\mathcal{O}((|\lambda| b^{-1} + b^{-2} \log b)^{2} b^{a})  \\
&= 	  \frac{1}{\alpha_p^4} \log b \; b^{-1} [1 + \mathcal{O}(\lambda)] [ 1 + \mathcal{O}(b^{-(1-a)})] + \mathcal{O}(b^{-1}).
\end{align*}
By using the implicit function theorem, we have the existence of a $C^1$ function $\lambda = \lambda(b)$ for sufficiently large $b$ 
which is given asymptotically by 
		\begin{equation}
			\lambda(b) = 144 \log b \; b^{-1} + \mathcal{O}(b^{-1},\log b \; b^{-1-a},(\log b)^2 \; b^{-2}),
			\label{eq:lamb_c4}
		\end{equation}
where we have used $p = \frac{1}{2}$, $\alpha_{p=\frac{1}{2}} = \frac{1}{4!}$, 
and 
$$
\int_{-\infty}^{+\infty} e^{2t} \Theta_h(t)^2 dt = \frac{1}{6 \alpha_p^3}.
$$
Hence, $\lambda(b)>0$ for sufficiently large values of $b$.\\

Theorem \ref{theorem-main} is proven. For details in Remark \ref{remark-fail}, 
we give the following computations.\\

{\bf Case $p  = 1$}. This case corresponds to $n = 1$ in the nonlinear equation (\ref{eq-5}), which we can rewrite in the equivalent form:
	\begin{align*}
&  \lambda \int_{-\infty}^{T + a \log b} e^{2t} \Theta_h'(t) \Theta_h(t) dt [1 + \mathcal{O}(b^{-2a})] \\
& - b^{-2} \int_{-\infty}^{T+a\log b} e^{4t} \Theta_h'(t)\Theta_h(t)dt [1 + \mathcal{O}(b^{-2a})] + 
\mathcal{O}((|\lambda| b^{-2} \log b + b^{-4(1-a)})^{2} b^{2a})  \\	
	&= 	  \frac{4 \Gamma\left( 1 - \frac{\lambda}{4} \right)}{\alpha_p^2   \Gamma\left(-\frac{\lambda}{4}\right)} [(T - (1-a) \log b) [ 1 + \mathcal{O}(b^{-2(1-a)})] + \mathcal{O}(1) ],
	\end{align*}
where $\alpha_{p=1} = \frac{1}{8}$. Since 
$$
\Gamma(z) = \frac{1}{z} + \mathcal{O}(1) \quad \mbox{\rm as} \;\; z \to 0
$$
and 
\begin{align*}
\int_{-\infty}^{T+a \log b}e^{2t}\Theta_h'(t)\Theta_h(t)dt 
= -\frac{1}{\alpha_p^2} (T + a \log b - 1) - \frac{1}{2 \alpha_p^2} \log(\alpha_p) + \mathcal{O}(b^{-2a}),
\end{align*}
the leading-order terms contain only $\lambda \log b$, which are not balanced by the terms of the order of $\mathcal{O}(1)$ to get the asymptotic balance $\lambda = \mathcal{O}((\log b)^{-1})$ according to Remark \ref{remark-fail}. This failure of the shooting method is due to only one exponential term that appears in (\ref{eq:Psi_c_t_c1}) for $n = 1$ and $\lambda = 0$. The way to handle the asymptotic balance is to obtain the second exponential terms from 
the higher-order (nonlinear) terms of the expansion for $\Psi_c(t)$ beyond the leading order. However, this adds complexity to the shooting method beyond the scopes of this work.

\end{document}